\newcommand{\Tau}{\mathsf T\!}
\newcommand{\uhr}{\upharpoonright}
\newcommand{\w}{\omega}
\newcommand{\cl}{\operatorname{cl}}
\newcommand{\ka}{\kappa}
\newcommand{\R}{\mathbb{R}}
\newcommand{\E}{E_{a,b}(X)}
\newcommand{\D}{D_{a,b}(X)}
\DeclareSymbolFont{extraup}{U}{zavm}{m}{n}
\DeclareMathSymbol{\varheart}{\mathalpha}{extraup}{86}
\DeclareMathSymbol{\vardiamond}{\mathalpha}{extraup}{87}
\newtheorem{problem}{Problem}
\newtheorem{theorem}{Theorem}[section]
\newtheorem{proposition}[theorem]{Proposition}
\newtheorem{corollary}[theorem]{Corollary}
\newtheorem{lemma}[theorem]{Lemma}
\newtheorem*{claim}{Claim}
\theoremstyle{definition}
\newtheorem{remark}[theorem]{Remark}
\newtheorem{example}[theorem]{Example}
\title[On regular separable countably compact $\R$-rigid spaces]{On regular separable countably compact $\R$-rigid spaces}
\author[S. Bardyla]{Serhii Bardyla}
\author[L.~Zdomskyy]{Lyubomyr Zdomskyy}
\address{Universit\"at Wien, Institut f\"ur Mathematik, Kurt G\"odel Research Center, Kolingasse 14--16, 1090 Wien.}
\email{sbardyla@yahoo.com}
\urladdr{http://www.logic.univie.ac.at/$\tilde{\ }$bardylas55/}
\email{lzdomsky@gmail.com}
\urladdr{http://www.logic.univie.ac.at/$\tilde{\ }$lzdomsky/}
\thanks{The first author was supported by the Austrian Science Fund FWF (grants I 3709-N35 and M 2967).
The second author was supported by the Austrian Science Fund FWF (grants I 3709-N35 and I 2374-N35).}
\subjclass[2010]{Primary 54C30. Secondary 54D35, 03E17}
\keywords{Countably compact space, separable space, Nyikos space, constant function, $\R$-rigid space}
\begin{document}
\begin{abstract}
A topological space $X$ is said to be {\em $Y$-rigid} if any continuous map $f:X\rightarrow Y$ is constant.
In this paper we construct a number of examples of regular countably compact $\mathbb R$-rigid spaces with additional properties like
separability and first countability. This way we answer several questions of Tzannes, Banakh, Ravsky, as well as
get a consistent example of $\mathbb R$-rigid Nyikos space. Also, we show that it is consistent with ZFC that for every cardinal $\kappa<\mathfrak c$ there exists a regular separable countably compact space $X$ which is $Y$-rigid with respect to any $T_1$ space $Y$ of pseudocharacter $\leq\kappa$.
\end{abstract}
\maketitle

\section{Introduction}


$Y$-rigid spaces are of particular interest in general topology, see, e.g., \cite{1, 2, CW, vD, 3.5, 4, 5, 6, 7, Tz,8}.
For a long time all known examples of regular $\R$-rigid spaces were far from being countably compact.
In~\cite{Tz} Tzannes constructed a Hausdorff countably compact $\R$-rigid space $T$ such that no pair of distinct points of $T$ have disjoint closed neighborhoods, i.e., this space is far from being regular. This motivated
Tzannes to ask the following two questions:

\begin{problem}[{\cite[Problem C65]{Prob}}]\label{p1}
Does there exist a regular (first countable, separable)
countably compact space on which every continuous real-valued function is constant?
\end{problem}

\begin{problem}[{\cite[Problem C66]{Prob}}]\label{p2}
Does there exist, for every Hausdorff space $R$, a regular
(first countable, separable) countably compact space on which every continuous
function into $R$ is constant?
\end{problem}

Recently, the first author jointly with Osipov obtained the following affirmative answer to both questions of Tzannes without the properties mentioned in brackets.

\begin{theorem}[{\cite[Theorem 3]{BO}}]\label{BO}
For each cardinal $\ka\geq\omega$ there exists a regular infinite  space $R_{\kappa}$
such that the closure in $R_\kappa$ of any $X\in [R_\kappa]^{\leq\kappa}$ is compact, and for
any $T_1$ space $Y$ of pseudocharacter $\psi(Y)\leq \kappa$ each continuous map $f:R_{\kappa}\rightarrow Y$ is constant.
\end{theorem}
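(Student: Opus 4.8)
\smallskip
\noindent\textit{Plan of proof.} The idea is first to reduce the statement about continuous maps to an intrinsic property of $R_\ka$, and then to construct $R_\ka$ as a ``condensed corkscrew'' in the spirit of the Hewitt condensed corkscrew. Call a subset of a space \emph{$\ka$-slim} if it is an intersection of at most $\ka$ open sets. The starting observation is that if $Y$ is a $T_1$ space with $\psi(Y)\le\ka$ and $f\colon X\to Y$ is continuous, then for every $x\in X$ the fibre $f^{-1}(f(x))$ is a nonempty closed $\ka$-slim subset of $X$: writing $\{f(x)\}=\bigcap_{i<\ka}U_i$ with the $U_i\subseteq Y$ open, we get $f^{-1}(f(x))=\bigcap_{i<\ka}f^{-1}(U_i)$, an intersection of at most $\ka$ open sets which is closed because $\{f(x)\}$ is closed in $Y$. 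Consequently it is enough to construct a regular infinite space $R_\ka$ such that \emph{(a)} the closure in $R_\ka$ of every $X\in[R_\ka]^{\le\ka}$ is compact, and \emph{(b)} the only nonempty closed $\ka$-slim subset of $R_\ka$ is $R_\ka$ itself; equivalently, any two points of $R_\ka$ lie in exactly the same closed $\ka$-slim sets. Note that (b) already forces $R_\ka$ to be connected, to have no isolated points, and, being $T_1$, to have pseudocharacter greater than $\ka$, so no off-the-shelf example can work.

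The building block is a ``$\ka$-corkscrew'' $W$: a regular space satisfying (a) and containing two points $p_0,p_1$ such that every closed $\ka$-slim subset of $W$ containing one of $p_0,p_1$ contains the other. Its mechanism is the Tychonoff-plank/corkscrew phenomenon lifted from $(\w,\w_1)$ to $(\ka,\ka^+)$: a continuous map into a $T_1$ space of pseudocharacter $\le\ka$ is eventually constant along any subspace homeomorphic to $[0,\ka^+]$, and since any set of at most $\ka$ ordinals below $\ka^+$ is bounded below $\ka^+$, the corresponding stabilization levels have a common bound $\gamma^*<\ka^+$; this yields a ``deep'' region beyond $\gamma^*$ on which a prescribed closed $\ka$-slim set through $p_0$ must be everything, and chaining planks through the corkscrew spiral carries this over to $p_1$. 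Already here one should note that the thin edges of an ordinary deleted plank do not have compact closure, so ensuring (a) for $W$ requires a more careful block than the classical corkscrew.

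The space $R_\ka$ is then obtained by condensation: let $X_0$ be a copy of $W$ and, given $X_n$, form $X_{n+1}$ by attaching, for every pair $x\ne y$ of points of $X_n$, a fresh copy of $W$ with its $p_0$ identified to $x$ and its $p_1$ to $y$ (so that each copy embeds as a subspace, glued along its two-point, and hence closed, ``ends''); then set $R_\ka=\bigcup_{n<\w}X_n$ with the corresponding topology. Now $R_\ka$ is infinite, and any two of its points lie in some $X_n$ and thus occur as the $(p_0,p_1)$ of a copy of $W$ attached at stage $n+1$; intersecting a supposed proper nonempty closed $\ka$-slim subset of $R_\ka$ with that copy and applying the block property yields a contradiction, so (b) holds.

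The delicate point, and what I expect to be the main obstacle, is to carry out the condensation so that regularity and (a) persist. Regularity of adjunction spaces of regular spaces can fail in general, but here the gluings take place along the corkscrew ``ends'' and can be arranged to behave well. Property (a) is the real difficulty: a $\ka$-sized subset of $R_\ka$ may meet $\ka$ many attached copies, and a union of $\ka$ many compact sets need not be compact, so a naive iterated adjunction destroys (a). I would therefore run the recursion in a \emph{graded} fashion, equipping $R_\ka$ with a rank function into $\ka^+$ such that every bounded rank-layer is compact while every $\le\ka$-sized subset is bounded in rank (using $\operatorname{cf}(\ka^+)>\ka$), attaching the copy associated to $(x,y)$ strictly above $x$ and $y$ in rank, and limiting how many copies land in a given rank-layer. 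Showing that $W$ and this bookkeeping can be chosen to meet the corkscrew requirement, regularity, and (a) simultaneously is the heart of the construction.
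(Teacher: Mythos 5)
First, note that this theorem is not proved in the paper under review: it is quoted verbatim from Bardyla--Osipov \cite{BO}, so there is no in-paper proof to compare against. What the paper does import from \cite{BO} is Lemma~\ref{C}, the statement that a continuous map from a punctured plank $T_{\lambda,\xi}$ ($\kappa^+<\lambda<\xi$ regular) into a $T_1$ space of pseudocharacter $\leq\kappa$ collapses a whole corner region. Your opening reduction (fibres are nonempty closed intersections of $\leq\kappa$ open sets, so it suffices that the only such subset of $R_\kappa$ is $R_\kappa$ itself) is correct, and your overall strategy --- large Tychonoff planks as the collapsing mechanism, amalgamated over all pairs of points in an $\omega$-length recursion in the style of the condensed corkscrew --- is essentially the strategy behind \cite{BO} as far as one can reconstruct it from Lemma~\ref{C} and from the parallel argument in the proof of Theorem~\ref{t2} here (the space $\Pi$ with the two unseparated closed sets $A$ and $B$ plays exactly the role of your block $W$).

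The gap is that the two steps you yourself flag as ``the heart of the construction'' are not carried out, and the sketch you give for the harder one does not work as stated. Constructing the block $W$ is actually the easier part (and your worry about it is misplaced: the punctured plank $T_{\lambda,\xi}$ with $\lambda,\xi$ regular and $>\kappa$ is already $\kappa$-bounded, since a set of size $\leq\kappa$ has both coordinate projections bounded off the deleted corner and hence lies in a compact ``L-shaped'' region). The genuine difficulty is preserving property (a) through the amalgamation $R_\kappa=\bigcup_n X_n$: a $\leq\kappa$-sized set can meet $\kappa$ many attached copies, its closure is not the union of the closures of its traces, and a union of $\kappa$ compact sets need not be compact. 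Your proposed fix --- a rank function into $\kappa^+$ with every bounded rank-layer compact and only boundedly many copies per layer --- is in tension with the requirement that a fresh copy of $W$ be attached to \emph{every} pair of points of $X_n$ (so that $|X_{n+1}|$ and the number of copies at each stage necessarily explode), and ``every bounded rank-layer is compact'' is far stronger than anything the blocks themselves provide, since $W$ is only $\kappa$-bounded, not compact. As it stands the proposal establishes the reduction and the collapsing mechanism but not the existence of the space; to close it you would need either a concrete verification that the closure of a $\leq\kappa$-sized set in the direct limit is trapped in a compact piece (exploiting that each point of $X_{n+1}\setminus X_n$ has a neighbourhood meeting only one copy of $W$ modulo later attachments), or a $\kappa$-bounded hull in the spirit of the Wallman-type operator $cW(\cdot)$ used elsewhere in this paper, together with a regularity check for the adjunctions.
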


However, for any $\kappa\geq\omega$ the space $R_{\kappa}$ is neither separable nor first countable. Since $\mathbb R$-rigid
spaces cannot be Tychonoff (and hence are non-compact), the ultimate version of Problem~\ref{p1} may be thought of as a strong form of the following
 notoriously difficult old problem of Nyikos, see ~\cite{Ny1,Ny2} for the history thereof and related results.

\begin{problem}[{\cite[Problem 1]{Ny2}}]\label{pN}
Does ZFC imply the existence of a regular separable first countable countably compact non-compact space?
\end{problem}

Further a regular separable first countable countably compact space will be called a {\em Nyikos} space.

In \cite{BBRk} Banakh, Ravsky and the first author posed the following problem, which is a weaker form of Nyikos'
Problem~\ref{pN}.

\begin{problem}[{\cite[Question 4.9]{BBRk}}]\label{p3}
Does there exist in ZFC an example of a separable regular sequentially compact space which
is not Tychonoff?
\end{problem}

In this paper we construct a ZFC example of a regular separable totally countably compact $\R$-rigid space, thus answering  Problem~\ref{p1} for separable spaces in the affirmative, see Theorem~\ref{t1}. On the other hand, it is easy to see that the answer to Problem~\ref{p2} for separable spaces is negative, see Remark~\ref{rem_sep}.  In Theorem~\ref{tt2} we show that it is consistent with ZFC that for every cardinal $\kappa<\mathfrak c$ there exists a regular separable totally countably compact space $X$ which is $Y$-rigid with respect to any $T_1$ space $Y$ of pseudocharacter $\leq\kappa$.
In addition we construct a consistent example of an $\R$-rigid Nyikos space, thus
providing a consistent counterexample to Problem~\ref{pN} with additional strong topological properties, see Theorem~\ref{t6}.
Finally, in Theorem~\ref{t4} we provide
 an affirmative answer to Problem~\ref{p3} outright in ZFC.

The notions used but not defined in this paper are standard and can be found in~\cite{int, Eng, Kun}.

\section{van Douwen's extension}\label{V}
For any distinct points $a,b$ of a topological space $X$ by $\operatorname{Const}(X)_{a,b}$ we denote the class of $T_1$ spaces $Y$ such that $f(a)=f(b)$ for any continuous mapping $f:X\rightarrow Y$.

In this section we slightly modify the construction of van Douwen~\cite{vD}. In particular, for any distinct non-isolated points $a,b$ of a separable regular space $X$ we shall construct a space $D_{a,b}(X)$ such that for any $Y\in \operatorname{Const}(X)_{a,b}$, each continuous function $h:D_{a,b}(X)\rightarrow Y$ is constant. Moreover, the space $D_{a,b}(X)$ remains regular and separable.

Let $X$ be a regular separable  space and $a,b\in X$ be non-isolated points.
For points $c,d\in X$ set $$\mathcal B(c|d)=\{U\subset X: U\hbox{ is an open neighborhood of } c\hbox{ and }d\notin \overline{U}\}.$$
  Put
$\mathcal{B}(a)=\mathcal B(a|b)$ and $\mathcal{B}(b)=\mathcal B(b|a)$.
 Let $Z$ be the Tychonoff product $X{\times}\omega$ where $\omega$ is endowed with the discrete topology. For any $x\in X$ and $n\in\omega$ by $x_n$ we denote the point $(x,n)$. Analogously, for any $B\subset X$ and $n\in\omega$ the set $B{\times}\{n\}$ is denoted by $B_n$. Let $D$ be a dense countable subset of $X$ such that $D\subset X\setminus\{a,b\}$. Put $E=\cup_{n\in\omega}D_{n}$. Fix any bijection $f:E\rightarrow \omega$ such that $f(x_{n})\neq n$ for each $n\in\omega$ and $x_{n}\in D_{n}$.  On the set $Z$ consider the smallest equivalence relation $\sim$ which satisfies the following conditions:
\begin{itemize}
\item $x_{n}\sim a_{f(x_n)}$ for any $n\in\omega$ and $x\in D$;
\item $b_{n}\sim b_{m}$ for any $n,m\in\omega$.
\end{itemize}

Let $D_{a,b}(X)$ be the quotient space $Z/_{\sim}$. By $\pi$ we denote the natural projection of $Z$ onto $D_{a,b}(X)$. Put $b^*=\pi(b_{n})\in D_{a,b}(X)$. Observe that the definition of the element $b^*$ does not depend on the choice of $n\in\omega$. A subset $B\subset Z$ is called {\em saturated} if $\pi^{-1}(\pi(B))=B$.
The following straightforward observation might be useful for arguments appearing later:
For any  $Q\subset X$ not containing $a,b$, the saturation $\pi^{-1}(\pi(Q_n))$ is the union of $Q_n$ with the closed discrete
set  $\{a_k:k\in f(Q_n\cap D_n)\}$.

\begin{lemma}\label{l0}
Let $X$ be a regular separable space. Then the space $D_{a,b}(X)$ is regular, separable and any continuous map $g:D_{a,b}(X)\rightarrow Y\in \operatorname{Const}(X)_{a,b}$ is constant.
\end{lemma}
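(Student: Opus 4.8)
The plan is to verify the three assertions in turn; separability and $\R$-rigidity are short, and regularity carries the weight. For separability, the countable set $\pi(A)$ is dense in $\D$: if $V\subseteq\D$ is nonempty open, then $\pi^{-1}(V)$ is a nonempty open subset of $Z=X\times\omega$, hence contains a basic box $U\times\{n\}$ with $U$ a nonempty open subset of $X$, and picking $d\in U\cap D$ (possible since $D$ is dense in $X$) gives $\pi(d_n)\in V\cap\pi(A)$.

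For $\R$-rigidity, fix $Y\in\operatorname{Const}(X)_{a,b}$ and a continuous $g\colon\D\to Y$. For each $n\in\omega$ the assignment $x\mapsto(x,n)$ is a homeomorphism of $X$ onto the clopen subspace $X\times\{n\}$ of $Z$, so $x\mapsto g(\pi(x_n))$ is continuous on $X$; by the definition of $\operatorname{Const}(X)_{a,b}$ it sends $a$ and $b$ to the same point, i.e. $g(\pi(a_n))=g(\pi(b_n))=g(b^*)$. By the first defining relation of $\sim$ we have $\pi(a_n)=\pi(f^{-1}(n))$, and since $f\colon A\to\omega$ is a bijection the points $f^{-1}(n)$, $n\in\omega$, exhaust $A$; hence $g\circ\pi$ takes the constant value $g(b^*)$ on $A$. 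As $Y$ is $T_1$, the set $(g\circ\pi)^{-1}(g(b^*))$ is closed in $Z$ and contains the dense set $A$, so it equals $Z$; therefore $g$ is constant.

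For regularity, note first that $\D$ is $T_1$, since the $\sim$-class of $b^*$ is $\{b\}\times\omega$, closed in $Z$ because $X$ is $T_1$, and every other class is finite. For the separation axiom it suffices to show that for each $z\in Z$ and each saturated open $O\subseteq Z$ with $z\in O$ there is a saturated open $W\subseteq Z$ with $z\in W$ whose closure $\overline W$ in $Z$ is again saturated and satisfies $\overline W\subseteq O$; then, $\pi$ being a quotient map, $\pi(W)$ is open, $\pi(\overline W)$ is closed with $\pi^{-1}(\pi(\overline W))=\overline W$, and $\pi(W)\subseteq\pi(\overline W)\subseteq\pi(O)$ is a closed neighbourhood of $\pi(z)$ inside the prescribed neighbourhood $\pi(O)$. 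Writing $O=\bigcup_n O_n\times\{n\}$ with each $O_n$ open in $X$ and searching for $W=\bigcup_n W_n\times\{n\}$ with each $W_n$ open in $X$, saturatedness of $W$ (resp.\ of $\overline W$) amounts to $d\in W_n\Leftrightarrow a\in W_{f(d_n)}$ for all $d\in D$, $n\in\omega$, plus ``$b\in W_n$ for some $n$ iff for all $n$'' (resp.\ the same with $\overline{W_n}$ throughout), while $\overline W\subseteq O$ says $\overline{W_n}\subseteq O_n$ for all $n$. One builds the $W_n$ by recursion on $\omega$: start from the slice(s) of $O$ determined by $z$, then repeatedly serve the demands that the two relations defining $\sim$ force on further levels; regularity of $X$ lets each slice be chosen small enough that its closure stays inside $O_n$, and neighbourhoods of $a$ are always taken from $\mathcal B(a)$ and those of $b$ from $\mathcal B(b)$, so that no closure of a neighbourhood of $a$ meets $b$ and conversely — which is exactly what keeps $\overline W$ saturated, in particular prevents a chain of identifications issuing from an $a$-slice from ever reaching the collapsed point $b^*$.

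The genuinely delicate step is this recursion: because of the identifications $x_n\sim a_{f(x_n)}$ even a tiny neighbourhood in $\D$ unavoidably ``spreads'' over infinitely many of the levels $X\times\{n\}$, so one must organise the bookkeeping so that every level receives only mutually compatible demands, so that the slice chosen at each level — possibly a countable union of small neighbourhoods of various points of $D\cup\{a,b\}$ — still has closure contained in $O_n$ (here one shrinks inside a closed neighbourhood furnished by regularity of $X$), and so that the resulting $\overline W$ really is saturated. Once such a $W$ is produced, the reduction above completes the proof.
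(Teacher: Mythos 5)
Your separability and constancy arguments are correct, and the constancy part is in fact slightly cleaner than the paper's: by composing $x\mapsto x_n$ with $\pi$ and $g$ you obtain a continuous map $X\to Y$ directly, so you never need the paper's intermediate claim that $\pi$ restricted to $X_n$ is an embedding of $X$ into $D_{a,b}(X)$. Likewise, the reduction of regularity to producing, inside a given saturated open $O\ni z$, a saturated open $W\ni z$ together with a saturated closed set squeezed between $W$ and $O$, and the $\omega$-recursion that serves the identification demands level by level, are exactly the paper's engine.

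The one real problem is the target you set for that recursion: you require the saturated closed set to be $\cl_Z(W)$ itself, and your description of the slices as unions of small neighbourhoods of points of $D\cup\{a,b\}$ cannot achieve this. Already the first slice $V\subseteq X_n$ around $x$ has boundary points $d\in(\cl_X(V)\setminus V)\cap D$; saturation of $\cl_Z(W)$ forces $a\in\cl_X(W_{f(d_n)})$, while saturation of $W$ forces $a\notin W_{f(d_n)}$ because $d\notin W_n$. So level $f(d_n)$ would have to carry an open set accumulating at $a$ without containing $a$ (e.g.\ a punctured neighbourhood $N\setminus\{a\}$), and such ``closure-only'' demands can later be upgraded to genuine $a$-demands when a boundary point is absorbed by a second contribution to its level, so levels must be revisited. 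None of this is necessary: as in the paper, build alongside $W=\bigcup_k B_{(k)}$ a second increasing sequence of closed saturated sets $C_{(k)}\supseteq\cl_Z(B_{(k)})$ in which \emph{every} point $y\in A\cap C_{(k)}$ (not only those in $B_{(k)}$) is served by a full closed neighbourhood $\cl_X(t(y))_{f(y)}\subseteq O$ of $a_{f(y)}$, while only the points of $A\cap B_{(k)}$ contribute the open part $t(y)_{f(y)}$ to $W$. Since $f$ is injective, each clopen level of $Z$ receives only finitely many contributions, so $C=\bigcup_k C_{(k)}$ is closed, and it is saturated by construction; then $\pi(W)\subseteq\pi(C)\subseteq\pi(O)$ gives regularity without ever asking $\cl_Z(W)$ to be saturated.
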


\begin{proof}
Observe that the countable set $\pi (E)$ is dense in $D_{a,b}(X)$ witnessing that the space $D_{a,b}(X)$ is separable.
Since each equivalence class of the relation $\sim$ is closed in $Z$ the space $D_{a,b}(X)$ is $T_1$. Fix any $q\in D_{a,b}(X)\setminus\{b^*\}$ and an open neighborhood $U$ of $q$. It is easy to see that $\pi^{-1}(q)\setminus\{a_k\mid k\in\omega\}$ is a singleton. It follows that there exists $n\in\omega$ such that $\pi^{-1}(q)\setminus\{a_k\mid k\in\omega\}=\{x_n\}$. By the definition of the quotient topology, to prove the regularity of $D_{a,b}(X)$ at the point $q$ it is sufficient to find an open saturated subset $B\subset Z$ and a closed saturated subset $C\subset Z$ such that $x_n\in B\subset C\subset \pi^{-1}(U)$ (note that if $x_n\in B$, then $\pi^{-1}(q)\subset B$). The subsets $B$ and $C$ will be constructed inductively. Since $X_n$ is a clopen subspace of the regular space $Z$ and $x_n\notin\{a_n, b_n\}$ there exists an open subset $V\subset X_n$ such that $a_n, b_n\notin \overline{V}$ and $x_n\in V\subset \overline{V}\subset \pi^{-1}(U)$. Put $B_{(0)}=C_{(0)}=\emptyset$, $B_{(1)}=V$ and $C_{(1)}=\overline{V}$.  Assume that for every $n\leq k$ we already constructed subsets $B_{(n)}$ and $C_{(n)}$ such that $B_{(n)}$ is open, $C_{(n)}$ is closed, $B_{(n)}\subset C_{(n)}\subset \pi^{-1}(U)$ and $B_{(i)}\subset B_{(i+1)}$, $C_{(i)}\subset C_{(i+1)}$ for every $i<k$. Put $B_{(k)}^*=(B_{(k)}\setminus B_{(k-1)})\cap E$ and $C_{(k)}^*=(C_{(k)}\setminus C_{(k-1)})\cap E$. Fix any function $t:C_{(k)}^*\rightarrow \mathcal{B}(a)$ such that $\overline{t(y)}_{f(y)}\subset \pi^{-1}(U)$ for each $y\in C_{(k)}^*$. Since $\pi^{-1}(U)$ is an open saturated subset and the space $Z$ is regular such a function exists.
Put $s=t {\uhr} B_{(k)}^*$.
Let $B_{(k+1)}=\cup\{s(y)_{f(y)}\mid y\in B_{(k)}^*\}\cup B_{(k)}$ and $C_{(k+1)}=\cup\{\overline{t(y)}_{f(y)}\mid y\in C_{(k)}^*\}\cup C_{(k)}$. Finally, put $B=\cup_{n\in\omega}B_{(n)}$ and $C=\cup_{n\in\omega}C_{(n)}$. At this point it is straightforward to check that the set $B$ ($C$, resp.) is an open (closed, resp.) saturated subset of $Z$ such that $\pi^{-1}(q)\subset B\subset C\subset \pi^{-1}(U)$. Hence the space $D_{a,b}(X)$ is regular at any point $q\neq b^*$.

Fix any open neighborhood $U$ of $b^*$ in $D_{a,b}(X)$. By the regularity of $Z$, for each $n\in\omega$ there exists an open neighborhood $V_n\subset X_n$ of $b_n$ such that $V_n\subset \overline{V_n}\subset \pi^{-1}(U)$. Put $B_{(0)}=C_{(0)}=\emptyset$ $B_{(1)}=\cup_{n\in\omega}V_n$ and $C_{(1)}=\cup_{n\in\omega}\overline{V_n}$. Observe that the family $\{\overline{V_n}\}_{n\in\omega}$ is locally finite which yields that the set $C_{(1)}$ is closed. Next, repeating the previous inductive construction we can find a desired sets $B$ and $C$ such that $B$ is open saturated, $C$ is closed saturated and $\pi^{-1}(b^*)\subset B\subset C\subset \pi^{-1}(U)$. Hence the space $D_{a,b}(X)$ is regular.

Fix any continuous map $g:D_{a,b}(X)\rightarrow Y\in\operatorname{Const}(X)_{a,b}$. Note that for any $n\in \omega$, $x\in X_n$ and an open neighborhood $U\subset X_n$ of $x$ there exists an open saturated subset $W\subset Z$ such that $U\subset W$ and $W\cap X_n=U$. Thus the map $h:X\rightarrow\pi(X_{n})$, $h(x)=\pi(x_{n})$ defines a homeomorphism between $X$ and the subspace $\pi(X_{n})$ of $D_{a,b}(X)$. Hence for any $n\in\omega$, $g(\pi(a_{n}))=g(\pi(b_{n}))$ by the choice of $Y$. The definition of the equivalence relation $\sim$ implies that $g(\pi(a_{n}))=g(b^*)$ for any $n\in\omega$. Since the set $\{\pi(a_{n})\mid n\in\omega\}=\pi(E)$ is dense in $D_{a,b}(X)$ and $Y$ is a $T_1$ space, the map $g$ is constant on the whole $D_{a,b}(X)$.
\end{proof}

The following lemmas will be useful in the next sections.
\begin{lemma}\label{l2}
Let $X$ be a regular separable countably compact space. Then a subset $P$ of $D_{a,b}(X)$ is closed and discrete iff $|P\cap\pi(X_n)|<\omega$ for each $n\in\omega$.
\end{lemma}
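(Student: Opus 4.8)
The plan is to prove both implications, with the forward direction being essentially trivial and the reverse direction requiring the bulk of the work. For the forward direction, suppose $C\subseteq D_{a,b}(X)$ is closed and discrete; I want $|C\cap\pi(X_n)|<\omega$ for each $n$. Recall from the proof of Lemma~\ref{l0} that $\pi(X_n)$ is homeomorphic to $X$, hence it is countably compact; since $C\cap\pi(X_n)$ is closed and discrete in the closed subspace $C$, it is closed and discrete in $D_{a,b}(X)$, hence in $\pi(X_n)$, and a closed discrete subset of a countably compact space is finite. That settles the ``only if'' part.

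For the ``if'' direction, assume $|C\cap\pi(X_n)|<\omega$ for every $n\in\omega$; I must show $C$ is closed and discrete. First I would reduce to showing $C$ has no accumulation point in $D_{a,b}(X)$ (this gives both closed and discrete simultaneously). Pick an arbitrary $q\in D_{a,b}(X)$; I want an open neighborhood of $q$ meeting $C$ in a finite set. Split into two cases according to whether $q=b^*$ or $q\neq b^*$, mirroring the case division in the proof of Lemma~\ref{l0}. In the case $q\neq b^*$, write $\{x_n\}=\pi^{-1}(q)\setminus\{a_k:k\in\omega\}$ as there. The key observation is that the inductive neighborhood construction from Lemma~\ref{l0} produces, at stage~$1$, a ``seed'' $V\subseteq X_n$ that can be shrunk as small as we like inside the copy $X_n$; and then each subsequent stage $B_{(k+1)}$ only adds pieces sitting inside copies $X_m$ indexed by $m=f(y)$ for $y$ ranging over the finite set $B_{(k)}^*\subseteq A$, with each such piece choosable (via the freedom in the function $t$, i.e.\ in the neighborhoods from $\mathcal B(a)$) to be contained in a prescribed open subset of that copy. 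So I can run the construction while simultaneously arranging that the portion landing in each copy $X_m$ is contained in a preassigned small open set avoiding all but finitely many points of $C\cap\pi(X_m)$ — and since $C\cap\pi(X_m)$ is already finite, I can even arrange the piece in $X_m$ to avoid $C\cap\pi(X_m)$ entirely except possibly for the finitely many points forced by $q$ itself. The resulting open saturated $B$ projects to an open neighborhood $\pi(B)$ of $q$ meeting $C$ in at most the finite set $C\cap(\pi(X_n)\cup\text{(the copies touched at stage }1))$, which is finite.

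The case $q=b^*$ is the main obstacle, because here the stage-$1$ set $\bigcup_{n\in\omega}V_n$ already meets infinitely many copies $X_n$ (one neighborhood $V_n$ of $b_n$ in each), so I cannot simply keep the construction confined to finitely many copies. The point to exploit is that $b$ is not isolated in $X$ and $V_n$ can be chosen to be an arbitrarily small neighborhood of $b_n$ inside $X_n$; since $C\cap\pi(X_n)$ is finite and $b^*\notin\pi(X_n)$ for the relevant representation (more precisely $b_n$ is the unique preimage of $b^*$ in $X_n$, and $b^*\notin C$ may be assumed, or if $b^*\in C$ we just tolerate that one point), regularity of $X$ lets us pick each $V_n$ with $\overline{V_n}$ missing the finitely many points of $\pi^{-1}(C\cap\pi(X_n))\cap X_n$ other than possibly $b_n$. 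Then $C_{(1)}=\bigcup_n\overline{V_n}$ is still locally finite (hence closed), as noted in Lemma~\ref{l0}, and misses $C$ except possibly at $b^*$. From stage~$2$ onward only finitely many new copies get touched at each stage and the same shrinking argument as in the first case applies. Assembling $B=\bigcup_n B_{(n)}$ gives an open saturated set with $\pi(B)$ a neighborhood of $b^*$ meeting $C$ in a finite (indeed at most one-point) set, completing the proof that $C$ is closed and discrete.
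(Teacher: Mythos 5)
Your forward direction is correct and coincides with the paper's. The reverse direction, however, rests on a false finiteness claim. You assert that each stage of the inductive construction touches only finitely many new copies $X_m$ because ``$B_{(k)}^*\subset A$ is finite.'' It is not: already $B_{(1)}^*=V\cap D_n$, and $D_n$ is dense in the clopen copy $X_n$, so $B_{(1)}^*$ is in general infinite, and stage $2$ spreads the set over the infinitely many copies $X_{f(y)}$, $y\in B_{(1)}^*$ (the same happens in your $q=b^*$ case, where stage $1$ already meets every copy). Hence your concluding estimate, that $\pi(B)\cap C$ is contained in the trace of $C$ on finitely many copies, does not follow from what you wrote: a priori $\pi(B)$ could pick up points of $C$ from infinitely many distinct copies, finitely many per copy but infinitely many in total. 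The approach can be repaired, but by a different mechanism: choose each $s(y)\in\mathcal{B}(a)$ so that $s(y)_{f(y)}$ misses the finite set $\pi^{-1}(C)\cap X_{f(y)}$ except possibly for the point $a_{f(y)}$ itself, and note that $a_{f(y)}\in\pi^{-1}(C)$ if and only if $y\in\pi^{-1}(C)$ (they lie in the same $\sim$-class). Since $a_{f(y)}\notin A$, no new element of $\pi^{-1}(C)$ ever enters $B\cap A$ after stage $1$, so $B\cap\pi^{-1}(C)$ remains finite even though infinitely many copies are visited. As it stands, this step of your argument is a genuine gap.

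You should also be aware that the paper's proof bypasses the inductive construction entirely. Since $|C\cap\pi(X_m)|<\omega$, the saturated set $\pi^{-1}(C)$ meets each clopen copy $X_m$ in a finite, hence closed, subset, so $\pi^{-1}(C)$ is closed in $Z$; by the definition of the quotient topology, $C$ is closed in $D_{a,b}(X)$. For discreteness, given $x\in C$, the set $U=Z\setminus\pi^{-1}(C\setminus\{x\})$ is open and saturated and contains $\pi^{-1}(x)$, so $\pi(U)$ is an open neighbourhood of $x$ with $\pi(U)\cap C=\{x\}$. This two-line argument delivers exactly what your much longer construction aims for, and I would recommend adopting it.
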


\begin{proof}
Let $P$ be a closed discrete subset of $D_{a,b}(X)$. Recall that the subspace $\pi(X_n)\subset D_{a,b}(X)$ is homeomorphic to $X$. Since $X$ is countably compact and $P$ is closed and discrete, the set $P\cap\pi(X_n)$ is finite for each $n\in\omega$.

Let $P$ be any subset of $D_{a,b}(X)$ such that $|P\cap\pi(X_n)|<\omega$ for each $n\in\omega$.
Observe that for any $x\in P$ the set $U=Z\setminus \pi^{-1}(P\setminus\{x\})$ is an open saturated subset of $Z$ which contains $\pi^{-1}(x)$. Thus $\pi(U)$ is an open neighborhood of $x$ in $D_{a,b}(X)$ such that $\pi(U)\cap P=\{x\}$.
Hence the set $P$ is discrete.

It is straightforward to check that the set $\pi^{-1}(P)$ is closed in $Z$. By the definition of the quotient topology, the set $P$ is closed in $D_{a,b}(X)$.
\end{proof}

 A family $\mathcal I\subset [X]^{\leq \omega}$ is called {\em P-ideal}, if for every $\mathcal J\in [\mathcal I]^{\omega}$ there exists $I\in\mathcal I$ such that $J\subset^* I$ for all $J\in\mathcal J$. In this paper ideals are not assumed to be proper, i.e., it is possible that $X\in \mathcal I$. This is justified by the following lemma, where $X$ can be a countable discrete space.

\begin{corollary}\label{c1}
 Let $X$ be a regular separable countably compact space. Then the family $\mathcal{F}$ of all countable closed discrete subsets of $D_{a,b}(X)$ is a P-ideal.
\end{corollary}

\begin{proof}
Fix any countable family $\mathcal{F}=\{P_{(n)}\}_{n\in\omega}$ of countable closed discrete subsets of $D_{a,b}(X)$. By Lemma~\ref{l2}, the set $P_{(n)}\cap \pi(X_k)$ is finite for any $n,k\in\omega$. Put $P_{(n)}^*=P_{(n)}\setminus (\cup_{k\leq n}\pi(X_k))$, $n\in\omega$. Let $P=\cup_{n\in\omega}P_{(n)}^*$. Observe that $|P\cap \pi(X_n)|<\omega$ for each $n\in\omega$. Lemma~\ref{l2} implies that $P$ is closed and discrete in $D_{a,b}(X)$. Since $P_{(n)}\subset^* P_{(n)}^*$, we have that $P_{(n)}\subset^* P$ for each $n\in\omega$.
\end{proof}

Following~\cite{BBR}, a subset $P$ of a space $X$ is called {\em strongly discrete} if each point $x\in P$ has a neighborhood $O_x\subset X$ such that the family $\{O_x\mid x\in P\}$ is disjoint and locally finite in $X$. A space $X$ is said to have {\em property $D$} if any countable closed discrete subset of $X$ is strongly discrete.

\begin{lemma}\label{l1}
Let $X$ be a regular separable countably compact space. Then $D_{a,b}(X)$ has property $D$.
\end{lemma}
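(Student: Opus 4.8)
The plan is to take an arbitrary countable closed discrete subset $C\subset D_{a,b}(X)$ and produce a disjoint, locally finite family of open neighborhoods $\{O_x\mid x\in C\}$ with $x\in O_x$. The natural strategy is to pull everything back to $Z=X\times\w$ and exploit that each slice $X_n$ is clopen and regular (indeed, $X$ is regular and separable). By Lemma~\ref{l2}, $C\cap\pi(X_n)$ is finite for every $n$, so in each slice we only have finitely many ``new'' points to separate. First I would observe that $b^*$ either is or is not in $C$; if $b^*\in C$, split it off first, choosing for $b^*$ a saturated neighborhood whose preimage is $\bigcup_n V_n$ with $\overline{V_n}\subset X_n$ shrinking to $\{b_n\}$ fast enough that the closures form a locally finite family in $Z$ and miss $C\setminus\{b^*\}$ (this is exactly the device used in the last paragraph of the proof of Lemma~\ref{l0}). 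After that, $C\setminus\{b^*\}$ consists of points $q$ each with a unique representative $x_n\in\pi^{-1}(q)$ lying in some slice that is \emph{not} of the form $a_k$.

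The main construction is then the inductive saturation procedure already rehearsed in Lemma~\ref{l0}: for each $q\in C\setminus\{b^*\}$ with witness $x_n$, build an open saturated $B^q$ and closed saturated $C^q$ with $x_n\in B^q\subset C^q\subset Z\setminus\pi^{-1}(C\setminus\{q\})$, by starting from a small $V\subset X_n$ around $x_n$ with $\overline V$ clopen-bounded away from $a_n,b_n$ and from the finitely many other points of $\pi^{-1}(C)$ in slice $n$, and then propagating through the identifications $x_m\sim a_{f(x_m)}$ using functions $t\colon C^*_{(k)}\to\mathcal B(a)$ as before. The key extra requirement beyond Lemma~\ref{l0} is \emph{disjointness across different $q$} and \emph{local finiteness of the whole family}: at each stage one must choose the neighborhoods $t(y)$ and $s(y)$ small enough that, slice by slice, the traces of the $C^q$'s are pairwise disjoint and, in each slice $X_n$, only finitely many of them are nonempty. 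Since $\pi(A)$ meets each $\pi(X_n)$ in the countable dense copy of $D$ and $C$ meets it finitely, a bookkeeping of which $q$'s can ``reach'' slice $n$ (only those whose chain of identifications passes through indices $\le$ something determined by the finitely many relevant $a$-coordinates) keeps the count finite; one makes this precise by, e.g., at step $k$ only allowing neighborhoods inside slices with index $\le$ some $N_k$, and arranging $\bigcup_q C^q$ to be locally finite because locally (in a single clopen slice $X_n$) it is a finite union of closed sets each locally finite there.

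I expect the main obstacle to be precisely this simultaneous control: Lemma~\ref{l0} only needed to separate \emph{one} saturated closed set from \emph{one} point, whereas here the open sets $O_x=\pi(B^x)$ must be pairwise disjoint and their closures (or the $\pi(C^x)$'s) must form a locally finite family in $D_{a,b}(X)$ — equivalently, $\bigcup_x C^x$ must be a closed saturated set meeting each $X_n$ in a relatively closed locally finite family, and with the $C^x$'s disjoint. The cleanest way around it is to do all the $q\in C$ in a single interleaved induction on $k$ (rather than one $q$ at a time), at stage $k$ handling for every $q$ the ``$k$-th generation'' of identified points, and at each stage shrinking the chosen $\mathcal B(a)$-neighborhoods so that (i) the new pieces added for distinct $q$ are disjoint from each other and from all previously chosen pieces, and (ii) they live in slices of index going to infinity with $k$, which forces local finiteness since any fixed slice $X_n$ receives contributions only from finitely many generations and finitely many $q$. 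Verifying that the resulting $B=\bigcup_q B^q$ projects to a family of disjoint open sets and that $C=\bigcup_q C^q$ is closed saturated and locally finite in $Z$ (hence $\pi(C)$ is locally finite in $D_{a,b}(X)$) is then a routine but careful check along the lines already used for Lemmas~\ref{l0} and~\ref{l2}.
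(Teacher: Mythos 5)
Your overall architecture is the same as the paper's: pull the closed discrete set back to $Z$, use Lemma~\ref{l2} to get that each slice $X_n$ contains only finitely many points of $\pi^{-1}(P)$, start from pairwise disjoint open pieces around the non-$a_k$ representatives whose closures avoid $a_{n}$ and $b_{n}$, and propagate through the identifications by attaching to each new point $y\in A$ a $\mathcal{B}(a)$-neighborhood of $a_{f(y)}$ chosen to miss all the initial pieces. Pairwise disjointness then indeed comes from the injectivity of $f$ (each slice receives at most one propagated piece over the whole construction), which is exactly the paper's condition $(v)$. Up to the cosmetic difference that you also carry along closed saturated sets $C^q$, the construction is the right one.

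The gap is in the local finiteness step, which is the actual content of the lemma. First, the bookkeeping device you propose --- ``at step $k$ only allow neighborhoods inside slices with index $\le N_k$'', so that the new pieces ``live in slices of index going to infinity with $k$'' --- is not available: the piece attached to $y\in A$ must be a neighborhood of $a_{f(y)}$ and hence lives in the slice $X_{f(y)}$ dictated by the fixed bijection $f$; you have no freedom to steer these indices. Second, and more seriously, the inference ``each slice meets only finitely many members of the family, hence the family is locally finite'' is not valid in $D_{a,b}(X)$. Lemma~\ref{l2} gives such an equivalence for closed discrete \emph{point sets}, not for families of sets: every neighborhood of $b^*$ contains a neighborhood of every $b_n$, and every nonempty saturated open set spreads through infinitely many slices, so slice-wise finiteness by itself does not prevent the members of the family from accumulating at $b^*$ (or at a point of $\pi(A)$) from infinitely many different slices. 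This is precisely where the paper's proof does its real work: for every point of $Z$ --- with separate cases for points lying in some $W(p)$, points $x_n\neq b_n$ outside all $W(p)$, and the points $b_n$ --- it explicitly builds a saturated open neighborhood meeting only finitely many $W(p)$, and this is where the requirements $a_{n(p)}\notin\cl(V(p))$ and $b_{n(p)}\notin\cl(V(p))$ on the initial pieces are actually used. Your construction does impose those requirements, so the argument can be completed, but the verification you defer as ``routine'' is the heart of the proof and the reason you give for it does not suffice.
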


\begin{proof}
Fix any countable closed discrete subset $P$ of $D_{a,b}(X)$. With no loss of generality we can assume that $b^*\notin P$. Lemma~\ref{l2} implies that for each $n\in\omega$ the set $\pi^{-1}(P)\cap X_n$ is finite. It follows that $\pi^{-1}(P)$ is a closed discrete subset of $Z$.
For each $p\in P$ there exists $x(p)\in X$ and $n(p)\in\omega$ such that $\{x(p)_{n(p)}\}=\pi^{-1}(p)\setminus\{a_k\mid k\in\omega\}$. Fix an open subset $V(p)\subset X_{n(p)}$ which satisfies the following conditions:
\begin{itemize}
\item[$(i)$] $x(p)_{n(p)}\in V(p)$;
\item[$(ii)$] $V(p)\cap V(q)=\emptyset$ if $p\neq q$;
\item[$(iii)$] $a_{n(p)}\notin \overline{V(p)}$;
\item[$(iv)$] $b_{n(p)}\notin \overline{V(p)}$.
\end{itemize}
Next we shall inductively construct open saturated pairwise disjoint subsets $W(p)\subset Z$, $p\in P$ such that $\pi^{-1}(p)\subset W(p)$ and the family $\{\pi (W(p))\mid p\in P\}$ is locally finite. For each $p\in P$ put $W(p)_{(0)}=\emptyset$ and $W(p)_{(1)}=V(p)$. Assume that we already constructed subsets $W(p)_{(n)}$ for each $n\leq k$ and $p\in P$  such that $W(p)_{(n)}\cap W(q)_{(n)}=\emptyset$ for any distinct $p,q$ from $P$.  Put $W(p)_{(k)}^*=(W(p)_{(k)}\setminus W(p)_{(k-1)})\cap E$ and fix any function $g:W(p)_{(k)}^*\rightarrow \mathcal{B}(a)$ which satisfies the following condition:
\begin{itemize}
\item[$(v)$] $g(z)_{f(z)}\cap V(q) =\emptyset$ for every $z\in W(p)_{(k)}^*$ and $q\in P$.
\end{itemize}
Such a function exists by the choice of $V(p)$, $p\in P$ (see condition $(iii)$ above).  Let $W(p)_{(k+1)}=\cup\{g(z)_{f(z)}\mid z\in W(p)_{(k)}^*\}\cup W(p)_{(k)}$. Finally, for each $p\in P$ put $W(p)=\cup_{n\in\omega}W(p)_{(n)}$. It is easy to see that subsets $W(p)$, $p\in P$ are open and saturated. Since the map $f$ is bijective the sets $W(p)$, $p\in P$ are pairwise disjoint.

It remains to show that the family $\{\pi(W(p))\mid p\in P\}$ is locally finite. For this fix any $n\in \omega$ and $x_n\in X_n\setminus\{a_n\}$.  Next we shall inductively construct an open saturated subset $U\subset Z$ which contains $x_n$ and intersects only finitely many members of the family $\{W(p)\mid p\in P\}$. If $x_n\in W(p)$ for some $p\in P$, then put $U=W(p)$. If $x_n\notin \cup\{W(p)\mid p\in P\}$ there are two cases to consider:

1. $x_n\neq b_n$.
Put $U_{(0)}=\emptyset$ and  $U_{(1)}=X_n\setminus\{b_n\}$. Assume that we already constructed $U_{(n)}$ for each $n\leq k$. Let $U_{(k)}^{*}=(U_{(k)}\setminus U_{(k-1)})\cap E$. Consider any function $g:U_{(k)}^*\rightarrow \mathcal{B}(a)$ such that $g(z)_{f(z)}\cap V(p)=\emptyset$ for any $p\in P$ and $z\in U_{(k)}^*$. The existence of such a function is provided by the bijectivity of $f$ together with the choice of the sets $V(p)$ (see condition $(iii)$ above). Put $U_{(k+1)}=\cup\{g(y)_{f(y)}\mid y\in U_{(k)}^*\}\cup U_{(k)}$. Finally, let $U=\cup_{n\in\omega}U_{(n)}$. Observe that there exist only finitely many $p\in P$ such that $W(p)\cap X_n\neq \emptyset$. It is easy to check that $U$ is an open saturated subset of $Z$ which contains $x_n$ and intersects only finitely many elements of the set $\{W(p)\mid p\in P\}$. Namely, $U\cap W(p)\neq\emptyset$ iff $W(p)\cap X_n\neq \emptyset$.

2. $x_n=b_n$. The choice of the sets $V(p)$ (see condition $(iv)$ above) and the definition of the sets $W(p)$ imply that for each $k\in\omega$ there exists an open neighborhood $V_k$ of $b_k$ such that $\cup\{W(p)\mid p\in P\}\cap V_k=\emptyset$.  Put $U_{(0)}=\emptyset$, $U_{(1)}=\cup_{k\in\omega}V_k$ and repeat the previous inductive construction to obtain an open saturated subset $U\subset Z$ such that $\{b_k\mid k\in\omega\}\subset U$ and $U\cap W(p)=\emptyset$ for all $p\in P$.

Hence for any $x\in Z$ there exists an open saturated set $U$ such that $x\in U$ and $\pi(U)\cap \pi(W(p))=\emptyset$ for all but finitely many $p\in P$ witnessing that the family $\{\pi(W(p))\mid p\in P\}$ is locally finite.
\end{proof}

Next we briefly recall a Jones machine (see~\cite{J} for more details) which turn a regular non-normal space $X$ into a regular space $J(X)$ which contains points $a,b$ such that $\mathbb{R}\in \operatorname{Const}(J(X))_{a,b}$. So, let $X$ be a regular space containing two closed sets $A$ and $B$ which cannot be separated by disjoint open sets. Let $\mathbb{Z}$ be a discrete space of integers and $a,b$ be distinct points which do not belong to $X{\times}\mathbb Z$. By $R$ we denote the set $X{\times}\mathbb Z\cup\{a,b\}$ endowed with the topology $\tau$ which satisfies the following conditions:
\begin{itemize}
\item the Tychonoff product $X{\times}\mathbb Z$ is open in $R$;
\item if $a\in U\in\tau$, then there exists $n\in\mathbb N$ such that $\{(x,-k)\mid x\in X$ and $k>n\}\subset U$;
\item if $b\in U\in\tau$, then there exists $n\in\mathbb N$ such that $\{(x,k)\mid x\in X$ and $k>n\}\subset U$.
\end{itemize}

On the space $R$ consider the smallest equivalence relation $\sim$ such that
$$(x,2n)\sim (x,2n+1)\hbox{ and } (y,2n)\sim(y,2n-1)$$
for any $n\in \mathbb Z$, $x\in A$ and $y\in B$.

Let $J(X)$ be the quotient space $R/_{\sim}$.
Note that $J(X)$ contains a homeomorphic copy of $X$ (for instance, one can consider the subspace $\pi (X\times\{n\})\subset J(X)$ for any integer $n$).
We claim that $f(a)=f(b)$ for each continuous map $f:J(X)\rightarrow \mathbb{R}$. Indeed, if $f(a)\neq f(b)$, then fix a sequence $\{U_{(n)}\mid n\in\omega\}$ of open neighborhoods of $f(a)$ such that $f(b)\notin \overline{\cup_{n\in\omega}U_{(n)}}$ and $\overline{U_{(n)}}\subset U_{(n+1)}$ for each $n\in\omega$. Let $k=\max\{n\in \mathbb{Z}\mid \pi(A{\times}\{n\})\subset f^{-1}(U_0)\}$. Observe that $B_1=\overline{f^{-1}(U_0)}\cap \pi(B{\times}\{k+1\})$ is a nonempty subset of $f^{-1}(U_{(1)})$ witnessing that $f^{-1}(U_{(1)})\cap \pi(X{\times}(k+1))\neq \emptyset$. It is easy to check that the closed sets $B_1$ and $\pi(A{\times}(k+2))$ cannot be separated by disjoint open subsets in $J(X)$. Thus $A_1=\overline{f^{-1}(U_1)}\cap \pi(A{\times}\{k+2\})$ is a nonempty subset of $f^{-1}(U_{(2)})$ and therefore $f^{-1}(U_{(2)})\cap \pi(X{\times}(k+2))\neq \emptyset$.
By the induction on $n\in\omega$ it can be shown that $\pi(X{\times}\{k+n\})\cap f^{-1}(U_{(n)})\neq \emptyset$ for each $n\in\omega$. Since $f(b)\notin \overline{\cup_{n\in\omega}U_{(n)}}$ there exists an open neighborhood $V$ of $f(b)$ such that $(\cup_{n\in\omega}U_{(n)})\cap V=\emptyset$. By the definition of the topology on $J(X)$, there exists  $m>k$ such that $\pi(X{\times}\{i\})\subset f^{-1}(V)$ for each $i\geq m$. However, $f^{-1}(V)\cap f^{-1}(U_{(k+m)})\neq \emptyset$  which implies a contradiction. Hence $f(a)=f(b)$ for each continuous real-valued function on $J(X)$ or, equivalently, $\mathbb{R}\in\operatorname{Const}_{a,b}(J(X))$.

Note that the space $J(X)$ preserves many different properties of a space $X$, in particular, regularity, separability, first countability and countable compactness.


\section{Separable regular totally countably compact $\R$-rigid spaces in ZFC}

A space $X$ is called {\em totally countably compact} if each infinite set in $X$ contains an infinite subset with compact closure in $X$.
A family $\mathcal{F}$ of closed subsets of a topological space $X$ is called a {\em closed ultrafilter} if it satisfies the following conditions:
\begin{itemize}
\item $\emptyset\notin\mathcal{F}$;
\item $A\cap B\in\mathcal{F}$ for any $A,B\in\mathcal{F}$;
\item a closed set $F\subset X$ belongs to $\mathcal F$ if $F\cap A\ne\emptyset$ for every $A\in\mathcal{F}$.
\end{itemize}
 For any $A\subset X$ let $\langle A\rangle$ be the set of all closed ultrafilters which contain an element $F\subset A$.
Following~\cite[\S3.6]{Eng}, the Wallman extension $W(X)$ of a $T_1$ space $X$ consists of closed ultrafilters on $X$ and carries the topology generated by the base consisting of the sets $\langle U\rangle$ where $U$ runs through open subsets of $X$.

By \cite[Theorem~3.6.21]{Eng}, the Wallman extension $W(X)$ is $T_1$ and compact. A $T_1$ space $X$ is normal if and only if $W(X)$ is Hausdorff~\cite[3.6.22]{Eng}.
Consider the map $j_X:X\to W(X)$ assigning to each point $x\in X$ the principal closed ultrafilter consisting of all closed sets $F\subset X$ containing the point $x$. It is clear that the image $j_X(X)$ is dense in $W(X)$. Since the space $X$ is $T_1$, Theorem~3.6.21 from~\cite{Eng} provides that the map $j_X:X\to W(X)$ is a topological embedding. So, $X$ can be identified with the subspace $j_X(X)$ of $W(X)$.
Before formulating the main result of this section we need a few auxiliary lemmas.

\begin{lemma}[{\cite[Lemma 1]{BO}}]\label{W}
Let $\mathcal{F}\in W(X)$ and $A\subset X$. Then $\mathcal{F}\in \cl_{W(X)}(A)$ if and only if $\cl_X(A)\in\mathcal{F}$.
\end{lemma}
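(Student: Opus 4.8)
The plan is to unwind the definition of the basic open sets $\langle U\rangle$ of the Wallman topology and verify both implications by a neighborhood argument, using that $X$ is $T_1$ (so singletons are closed) to see exactly when a principal closed ultrafilter $j_X(a)$ lies in $\langle U\rangle$ --- namely, precisely when $a\in U$, since $\{a\}$ is then a closed set through $a$ contained in $U$, and conversely any closed set in $j_X(a)$ contains $a$. Recall also that it suffices to test the basic open sets $\langle U\rangle$, $U$ open in $X$, when checking membership in a closure.

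For the ``if'' direction I would assume $\cl_X(A)\in\mathcal F$ and take an arbitrary basic neighborhood $\langle U\rangle\ni\mathcal F$, so there is $F\in\mathcal F$ with $F\subset U$. Since $\mathcal F$ is closed under finite intersections and omits $\emptyset$, the set $F\cap\cl_X(A)$ is a nonempty member of $\mathcal F$; choosing $x$ in it gives $x\in U$ and $x\in\cl_X(A)$, so the open set $U$ meets $A$, say at a point $a$. As $\{a\}$ is closed and contained in $U$ we get $j_X(a)\in\langle U\rangle$, whence $\langle U\rangle\cap j_X(A)\ne\emptyset$. Since $\langle U\rangle$ was an arbitrary basic neighborhood of $\mathcal F$, this yields $\mathcal F\in\cl_{W(X)}(A)$.

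For the ``only if'' direction I would argue contrapositively: if $\cl_X(A)\notin\mathcal F$, then since $\cl_X(A)$ is closed and $\mathcal F$ is a closed ultrafilter, the maximality clause in the definition must fail for $\cl_X(A)$, producing $B\in\mathcal F$ with $B\cap\cl_X(A)=\emptyset$. Setting $U=X\setminus\cl_X(A)$ we have $B\subset U$, so $\mathcal F\in\langle U\rangle$, while for no $a\in A$ can $j_X(a)$ lie in $\langle U\rangle$: that would require a closed set through $a$ contained in $X\setminus\cl_X(A)$, which is impossible as $a\in\cl_X(A)$. Thus $\langle U\rangle$ is a neighborhood of $\mathcal F$ disjoint from $j_X(A)$, so $\mathcal F\notin\cl_{W(X)}(A)$.

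The whole argument is essentially bookkeeping with the definitions; there is no real obstacle. The only points requiring a moment's care are that it is legitimate to restrict attention to the base sets $\langle U\rangle$ when testing closure, and the small equivalence $j_X(a)\in\langle U\rangle\iff a\in U$, which is where the $T_1$ hypothesis on $X$ is used.
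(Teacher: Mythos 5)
Your proof is correct; note that the paper does not prove this lemma itself but imports it from \cite{BO} (Lemma 1 there), and your argument is exactly the standard verification one would expect: both directions reduce, via the basic sets $\langle U\rangle$ and the $T_1$-based equivalence $j_X(a)\in\langle U\rangle\iff a\in U$, to the three defining clauses of a closed ultrafilter. No gaps.
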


\begin{corollary}\label{Wc}
For any subset $V\subset X$, $\langle \cl_X(V)\rangle=\cl_{W(X)}(V)=\cl_{W(X)}(\langle V\rangle)$.
\end{corollary}

\begin{proof}
Let $\mathcal F$ be a closed ultrafilter on $X$. Assume that $\cl_X(V)\in \mathcal F$. Fix any basic open neighborhood $\langle U\rangle$ of $\mathcal F$. Then the open set $U$ contains some element $G\in \mathcal F$ witnessing that $\emptyset\neq\cl_X(V)\cap G\subset U$. Fix any $y\in \cl_X(V)\cap G$. Since $U$ is an open neighborhood of $y$ we get that $U\cap V\neq \emptyset$. Hence $\mathcal F\in \cl_{W(X)}(V)\subset \cl_{W(X)}(\langle V\rangle)$.

Assume that $\cl_X(V)\notin \mathcal F$. Since $\mathcal F$ is a closed ultrafilter there exists $G\in\mathcal F$ such that $G\cap\cl_X(V)=\emptyset$.
Then $\langle X\setminus \cl_X(V)\rangle$ is an open neighborhood of $\mathcal F$ disjoint with $V$, and hence also with $\langle V\rangle$. Thus $\langle \cl_X(V)\rangle=\cl_{W(X)}(V)=\cl_{W(X)}(\langle V\rangle)$.
\end{proof}

For a topological space $X$ by $cW(X)$ we denote the following subspace of the Wallman extension $W(X)$:
$$cW(X)=\cup\{\cl_{W(X)}(A)\mid A \hbox{ is a countable discrete closed subset of } X\}.$$

\begin{lemma}\label{l3}
 Let $X$ be a regular space which has property $D$ and the family of all countable closed discrete subsets of $X$ forms a P-ideal.
Then the space $cW(X)$ is regular and countably compact.
\end{lemma}

\begin{proof}
Since the Wallman extension $W(X)$ is $T_1$, the space $cW(X)$ is $T_1$ as well. To prove the regularity of $cW(X)$ we will show that $W(X)$ is regular at any point of $cW(X)$. Fix any $x\in X$ and open neighborhood $\langle U\rangle$ of $x$ in $W(X)$. By the regularity of $X$, there exists an open neighborhood $V\subset X$ of $x$ such that $\cl_{X}(V)\subset U$. Corollary~\ref{Wc} provides that
$\langle V\rangle\subset\langle \cl_{X}(V)\rangle=\cl_{W(X)}(\langle V\rangle)\subset \langle U\rangle$. Hence the space $W(X)$ is regular at any point $x\in X$.

Consider any closed ultrafilter $\mathcal{F}\in cW(X)\setminus X$ and fix any basic open neighborhood $\langle U\rangle$ of $\mathcal{F}$ in $W(X)$. Take any $F\in\mathcal{F}$ such that $F\subset U$. By the definition of $cW(X)$, there exists a countable closed discrete subset $G\subset X$ such that $\mathcal{F}\in \cl_{W(X)}(G)$. Lemma~\ref{W} implies that $G\in \mathcal{F}$. Put $H=F\cap G$ and note that $H$ is closed and discrete, $H\subset U$ and $H\in\mathcal{F}$.
Since the space $X$ possesses property $D$, the subset $H$ is strongly discrete. Hence there exists a locally finite family $\{W(y)\mid y\in H\}$ of pairwise disjoint open subsets of $X$ such that $y\in W(y)$ for each $y\in H$. Also, with no loss of generality we can assume that $\cup\{W(y)\mid y\in H\}\subset U$. By the regularity of $X$, for every $y\in H$ there exists an open subset $V(y)$ of $X$ such that $y\in V(y)\subset \cl_{X}(V(y))\subset W(y)$. Since the family $\{W(y)\mid y\in H\}$ is locally finite the set $\cup\{\cl_{X}(V(y))\mid y\in H\}$ is closed in $X$. Then Corollary~\ref{Wc} implies the following:
$$\mathcal{F}\in\langle\cup\{V(y)\mid y\in H\}\rangle\subset \langle\cup\{\cl_{X}(V(y))\mid y\in H\}\rangle=\cl_{W(X)}(\langle\cup\{V(y)\mid y\in H\}\rangle)\subset \langle U\rangle.$$
Hence the space $W(X)$ is regular at any point of $cW(X)$ witnessing that $cW(X)$ is regular.

It remains to show that $cW(X)$ is countably compact. Assuming the contrary, let $A$ be an infinite closed discrete subset of $cW(X)$. The definition of the space $cW(X)$ implies that the set $A\cap X$ is finite.
 Consider any countable infinite subset $D=\{\mathcal{F}_{(n)}\mid n\in\omega\}\subset A\cap (cW(X)\setminus X)$. For each $n\in\omega$ there exists a countable closed discrete subset $C_{(n)}\subset X$ such that $C_{(n)}\in\mathcal{F}_{(n)}$. By the assumption, there exists a countable closed discrete subset $C\subset X$  such that $C_{(n)}\subset^* C$. Note that $C\in\mathcal{F}_{(n)}$ for each $n\in\omega$. Then $\{\mathcal{F}_{(n)}\mid n\in\omega\}$ is contained in a compact subset $\cl_{cW(X)}(C)=\cl_{W(X)}(C)$ which implies a contradiction.
Hence the space $cW(X)$ is countably compact.
\end{proof}

Now we are able to formulate the main result of this section which gives an affirmative answer to Problem~\ref{p1} for separable spaces.

\begin{theorem}\label{t1}
Each separable regular non-normal (totally) countably compact space $X$  can be embedded into a regular separable (totally) countably compact $\R$-rigid space $R(X)$.
\end{theorem}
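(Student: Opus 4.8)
The plan is to realize $R(X)$ as $cW\bigl(D_{a,b}(X')\bigr)$, where $X'$ is an auxiliary regular separable (totally) countably compact space containing $X$ and carrying two distinct non-isolated points $a,b$ with $\R\in\operatorname{Const}(X')_{a,b}$ — i.e., such that every continuous real-valued function on $X'$ is constant on $\{a,b\}$.

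Given such an $X'$, the target is assembled from the lemmas of the previous sections. By Lemma~\ref{l0} the space $D_{a,b}(X')$ is regular and separable, and every continuous map from it into a member of $\operatorname{Const}(X')_{a,b}$ — in particular into $\R$ — is constant, so $D_{a,b}(X')$ is $\R$-rigid. Since $X'$ is regular, separable and countably compact, Lemma~\ref{l1} and Corollary~\ref{c1} give that $D_{a,b}(X')$ has property $D$ and that every countable family of countable closed discrete subsets of it is almost dominated by one such subset, so Lemma~\ref{l3} applies and $R(X):=cW\bigl(D_{a,b}(X')\bigr)$ is regular and countably compact. It is separable, because $\pi(A)$, already dense in $D_{a,b}(X')$, remains dense; and it is $\R$-rigid, because $D_{a,b}(X')$ is a dense $\R$-rigid subspace and $\R$ is Hausdorff. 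Moreover $\pi(X'_n)$ is a closed subspace of $D_{a,b}(X')$ homeomorphic to $X'$ — a short computation with $\sim$ shows that the $\pi$-preimage of $\pi(X'_n)$ is the union of the clopen slice $X'_n$ with two closed discrete sets (of copies of $a$ and of $b$), hence closed — and this copy stays closed in $cW\bigl(D_{a,b}(X')\bigr)$, since any closed ultrafilter containing it meets every countable closed discrete set in a finite, hence principal, set; thus $X\subseteq X'$ embeds into $R(X)$. For the parenthetical variant one checks that $cW(Y)$ is totally countably compact when $Y=D_{a,b}(X')$ and $X'$ is totally countably compact: an infinite $S\subseteq cW(Y)$ either meets some closed copy $\pi(X'_n)$ in an infinite set, which then has an infinite subset with compact closure by total countable compactness of $X'$; or, by Lemma~\ref{l2}, contains an infinite countable closed discrete subset of $Y$, whose closure in $W(Y)$ is compact and lies in $cW(Y)$; or is almost disjoint from $Y$, in which case Corollary~\ref{c1} produces a countable closed discrete $C\subseteq Y$ whose compact closure $\cl_{W(Y)}(C)$ absorbs an infinite subset of $S$.

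It remains to build $X'$, and this is where non-normality is used and where the main difficulty lies. Fix disjoint closed $A,B\subseteq X$ with no disjoint open neighborhoods. By regularity both $A$ and $B$ are infinite and non-compact (a finite closed set, and a compact closed set, are separated from any disjoint closed set). For every continuous $f\colon X\to\R$ the images $f(A)$ and $f(B)$ are compact (continuous images of countably compact subspaces of $\R$), hence if they were disjoint they would be separated in $\R$, separating $A$ from $B$; applying this to finite tuples $(f_1,\dots,f_k)\colon X\to\R^k$ shows that the closed sets $Z_f=\{(x,y)\in A\times B: f(x)=f(y)\}$, $f\in C(X,\R)$, have the finite intersection property. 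A closed ultrafilter on $A\times B$ containing all the $Z_f$ projects onto closed ultrafilters on $A$ and on $B$, which extend to closed ultrafilters $\mathcal F,\mathcal G\in W(X)$ with $A\in\mathcal F$ and $B\in\mathcal G$ (so $\mathcal F\ne\mathcal G$, and each is a non-isolated point of $W(X)$ since $A,B$ are infinite); as $f$ is bounded on $A$ and on $B$, the values $W(f)(\mathcal F)$ and $W(f)(\mathcal G)$ are real numbers, and they agree because $Z_f$ belongs to the chosen ultrafilter. Thus $\mathcal F,\mathcal G$ are distinct points glued by every continuous real-valued function on $X$. I would then set $X'=X\cup\{\mathcal F,\mathcal G\}$ with $a=\mathcal F$, $b=\mathcal G$: it is separable since $X$ is dense, countably compact (and totally countably compact if $X$ is) since any infinite subset is almost contained in $X$, and $W(X)$ — hence $X'$ — is regular at every point of $X$, as in the proof of Lemma~\ref{l3}. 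The hard part is to guarantee regularity of $X'$ at the two new points while keeping them non-isolated: $W(X)$ need not be regular there, so one must either refine the topology in a neighborhood of $\{\mathcal F,\mathcal G\}$, or — more robustly — choose $A$, $B$ and the ultrafilter so that the witnessing ultrafilters have members to which the strong-discreteness/shrinking argument used in the proof of Lemma~\ref{l3} still applies, making the inherited topology regular at $\mathcal F$ and $\mathcal G$. With such an $X'$ in hand, the second paragraph completes the construction.
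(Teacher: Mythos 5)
Your Steps 2 and 3 (forming $D_{a,b}(X')$ and then $cW(D_{a,b}(X'))$ via Lemma~\ref{l0}, Lemma~\ref{l1}, Corollary~\ref{c1} and Lemma~\ref{l3}) coincide with the paper's proof, and the verification of total countable compactness is essentially the paper's case analysis. The genuine gap is in Step 1, the construction of the auxiliary space $X'$, and you have correctly located it yourself: your proposal adds to $X$ two closed ultrafilters $\mathcal F,\mathcal G\in W(X)$ (containing $A$ and $B$ respectively, correlated by a closed ultrafilter on $A\times B$ containing all the sets $Z_f$), but the Wallman extension of a non-normal space is in general not even Hausdorff, and nothing in your argument produces closed neighborhoods of $\mathcal F$ or $\mathcal G$ inside a given basic neighborhood $\langle U\rangle$. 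The mechanism that saves regularity in Lemma~\ref{l3} is property $D$: it applies only to ultrafilters lying in the closure of a \emph{countable closed discrete} set, which can be shrunk to a locally finite family of open sets whose union has controlled closure. Your $\mathcal F$ and $\mathcal G$ instead contain the closed sets $A$ and $B$, which are countably compact and typically uncountable and non-discrete, so the strong-discreteness shrinking argument is unavailable; ``refining the topology near $\{\mathcal F,\mathcal G\}$'' or ``choosing the ultrafilters more carefully'' is exactly the unsolved part of the problem, not a routine adjustment. (A secondary point: even the hypotheses of Section~\ref{V} --- that $\mathcal B(a)$ and $\mathcal B(b)$ are nonempty, i.e.\ that $a$ and $b$ have closed neighborhoods missing each other --- are not guaranteed for your $\mathcal F,\mathcal G$ without regularity there.)

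The paper avoids this entirely by using the Jones machine: it takes $\IZ$-many copies of $X$, glues copy $2n$ to copy $2n{+}1$ along $A$ and copy $2n$ to copy $2n{-}1$ along $B$, and adds two points $a,b$ whose neighborhoods contain whole tails $X\times\{k\}$, $|k|>n$. Regularity at $a$ and $b$ is then immediate (the closure of a tail is contained in a slightly shorter tail plus the point), $X$ embeds as any slice, and the identity $f(a)=f(b)$ for continuous real-valued $f$ follows from a chain argument propagating the non-separability of $A$ and $B$ through consecutive copies. Your ultrafilter computation showing that the sets $Z_f$ have the finite intersection property and that $W(f)(\mathcal F)=W(f)(\mathcal G)$ is correct and is a nice alternative way to see that \emph{some} compactification-type extension identifies the two closed sets under all real-valued maps, but as it stands it does not yield a \emph{regular} extension, which is the whole point of the theorem. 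To repair the proof you should replace your construction of $X'$ by the Jones construction $J(X)$ (or supply an actual proof of regularity at the two added points, which I do not believe is available along the Wallman route).
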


\begin{proof}
Let $X$ be any separable regular non-normal (totally) countably compact space. We shall make three steps to construct a desired space $R(X)$.

{\bf Step 1}. Using Jones machine (see the end of Section~\ref{V}), construct the space $J(X)$. It is straightforward to check that $J(X)$ remains regular, separable, (totally) countably compact, and contains a homeomorphic copy of $X$. Also, $f(a)=f(b)$ for each continuous real-valued function on $J(X)$.

{\bf Step 2}. Consider the space $D_{a,b}(J(X))$. By Lemma~\ref{l0}, the space $D_{a,b}(J(X))$ is separable, re\-gu\-lar and admits only constant continuous real-valued functions. Also, recall that $D_{a,b}(J(X))$ contains a homeomorphic copy of $J(X)$ and, as a consequence, of $X$.  However, the space $D_{a,b}(J(X))$ fails to be countably compact.

{\bf Step 3}. Consider the space $R(X)=cW(D_{a,b}(J(X)))$. By Lemma~\ref{l1} and Corollary~\ref{c1}, the space $D_{a,b}(J(X))$ satisfies conditions of Lemma~\ref{l3}. Hence the space $R(X)$ is regular separable countably compact and contains $D_{a,b}(J(X))$ as a dense subspace which implies that $R(X)$ is $\R$-rigid.

It remains to check that the space $R(X)$ is totally countably compact whenever $X$ is totally countably compact. Recall that in this case $J(X)$ is totally countably compact. Fix any countable infinite subset $D\subset R(X)$. There are two cases to consider:

 1. The set $D\cap D_{a,b}(J(X))$ is infinite.
If there exists $n\in\omega$ such that the set $M=\pi(J(X)_n)\cap D$ is infinite, then by the total countable compactness of $J(X)$ (recall that $\pi(J(X)_n)$ is homeomorphic to $J(X)$) there exists an infinite subset $N\subset M$ such that $\overline{N}$ is compact in $R(X)$. If for each $n\in \omega$ the set $D\cap \pi(J(X)_n)$ is finite, then Lemma~\ref{l2} implies that the set $D$ is closed and discrete in $D_{a,b}(J(X))$. Hence the set $\cl_{R(X)}(D)=\cl_{W(D_{a,b}(J(X)))}(D)$ is compact.

2. The set $D\cap D_{a,b}(J(X))$ is finite. Let $D\cap (R(X)\setminus D_{a,b}(J(X)))=\{\mathcal{F}_{(n)}\mid n\in\omega\}$. For each $n\in\omega$ there exists a closed discrete subset $C_{(n)}\subset D_{a,b}(J(X))$ such that $C_{(n)}\in\mathcal{F}_{(n)}$. By Corollary~\ref{c1}, there exists a countable closed discrete subset $C\subset D_{a,b}(J(X))$ such that $C_{(n)}\subset^* C$. Note that $C\in\mathcal{F}_{(n)}$ for each $n\in\omega$. Then $\{\mathcal{F}_{(n)}\mid n\in\omega\}$ is contained in a compact subset $\cl_{R(X)}(C)=\cl_{W(D_{a,b}(J(X)))}(C)$.
Hence the space $R(X)$ is totally countably compact.
\end{proof}

The following example shows that the spaces used in the statement of Theorem~\ref{t1} exist in ZFC.

\begin{example} Fix any ultrafilter $p\in\omega^*$ such that the subspace $\beta(\omega)\setminus \{p\}$ is non-normal. The existence in ZFC of such an ultrafilter was proved by Blaszczyk and Szyma\'{n}ski in~\cite{BS}. We claim that the space $\beta(\omega)\setminus\{p\}$ is separable, regular, non-normal and totally countably compact. The first three conditions are obvious. Fix any countable subset $A\subset \beta(\omega)\setminus\{p\}$. Since $\beta(\omega)$ contains only trivial convergent sequences there exists an open neighborhood $U$ of $p$ in $\beta(\omega)$ such that the set $B=A\setminus U$ is infinite. Then $\overline{B}$ is a compact subset of $\beta(\omega)\setminus\{p\}$ witnessing that the space $\beta(\omega)\setminus\{p\}$ is totally countably compact.
\end{example}



\section{Consistent generalization}
For a cardinal $\theta$ a subset $\Tau=\{T_{\alpha}\}_{\alpha\in\theta}\subset [\omega]^{\omega}$ is called a {\em tower} if for any $\alpha<\beta<\theta$, $T_{\alpha}\subset^*T_{\beta}$ and $|T_{\beta}\setminus T_{\alpha}|=\omega$. A tower $\Tau=\{T_{\alpha}\}_{\alpha\in\theta}$ is called {\em regular} if the cardinal $\theta$ is regular. A tower $\Tau$ is called {\em maximal} if for any infinite subset $I$ of $\omega$ there exists $T_{\alpha}\in \Tau$ such that the set $I\cap T_{\alpha}$ is infinite.

In this section, we provide a consistent generalization of Theorem~\ref{t1}. For this purpose we will essentially use an example of Franklin and Rajagopalan constructed in~\cite{FR} (see also Example 7.1 in~\cite{int}).

For any ordinals $\alpha,\beta$ by $[\alpha,\beta]$ (resp. $[\alpha,\beta)$) we denote the set of all
ordinals $\gamma$ such that $\alpha\leq \gamma\leq \beta$ (resp. $\alpha\leq \gamma< \beta$). Further, if some ordinal $\alpha$ is considered as a topological space, then $\alpha$ is assumed to carry the order topology.

\begin{lemma}[{\cite[Lemma 2]{BO}}]\label{CCC}
Let $\kappa$ be an infinite cardinal, $\xi$ a regular cardinal $> \kappa^+$ and $Y$
a $T_1$ space of pseudocharacter $\leq \kappa$. Then for each continuous map
$f : \xi \rightarrow Y$ there exist $y\in Y$ and $\mu\in \xi$ such that $[\mu, \xi) \subset f^{-1}(y)$.
\end{lemma}

For any cardinals $\alpha,\beta$, by $T_{\alpha,\beta}$ we denote the subspace $(\alpha+1){\times}(\beta+1)\setminus\{(\alpha,\beta)\}$ of the Tychonoff product $(\alpha+1){\times}(\beta+1)$.

\begin{lemma}\label{CC}
Let $\kappa$ be an infinite cardinal, $\xi$ a regular cardinal $> \kappa^+$ and $Y$
a $T_1$ space of pseudocharacter $\leq \kappa$. Then for each continuous map
$f : T_{\xi,\xi} \rightarrow Y$ there exist $y\in Y$ and $\mu\in \xi$ such that $[\mu, \xi]^2\setminus (\xi,\xi) \subset f^{-1}(y)$.
\end{lemma}

\begin{proof}
Fix any $T_1$ space $Y$ of pseudocharacter $\leq \kappa$ and a continuous map $f:T_{\xi,\xi} \rightarrow Y$. Note that the diagonal $\{(\alpha,\alpha): \alpha\in \xi\}$ of $T_{\xi,\xi}$ is homeomorphic to $\xi$. Then Lemma~\ref{CCC} implies that there exist $y\in Y$ and $\delta\in \xi$ such that $\{(\alpha,\alpha):\alpha\in [\delta,\xi)\}\subset f^{-1}(y)$. Since pseudocharacter of $Y$ is $\leq \kappa$ there exists a family $\{U_{\beta}:\beta\in \kappa\}$ of open neighborhoods of $y$ such that $\cap_{\beta\in\kappa}U_{\beta}=\{y\}$. Fix any $\beta\in\kappa$. Since $f^{-1}(U_{\beta})$ is open, for each $\alpha\in[\delta,\xi)$ there exists $\theta(\alpha)<\alpha$ such that $(\theta(\alpha),\alpha]^2 \subset f^{-1}(U_{\beta})$. Then the defined above map $\theta: [\delta,\xi)\rightarrow \xi$ satisfies the conditions of Fodor's Lemma~\cite[Lemma III.6.14]{Kun}. Hence there exists $\eta_\beta\in \xi$ such that for stationary many $\alpha\in [\delta, \xi)$ we have that $\theta(\alpha)=\eta_{\beta}$. It is easy to check that  $[\eta_{\beta},\xi)^2 \subset f^{-1}(U_{\beta})$.
Let $\mu=\sup_{\beta\in\kappa}\eta_{\beta}$. Since $cf(\xi)>\kappa$ we get that $\mu\in\xi$.
Observe that
$$[\mu, \xi)^2\subset \cap_{\beta\in\kappa} f^{-1}(U_{\beta})=f^{-1}(\cap_{\beta\in\kappa}U_{\beta})=f^{-1}(y).$$
Since the space $Y$ is $T_1$ we get that the set $f^{-1}(y)$ is closed. Therefore
$$[\mu, \xi]^2\setminus (\xi,\xi)=\cl_{T_{\xi,\xi}}([\mu, \xi)^2)\subset f^{-1}(y).$$
\end{proof}

The following theorem is the main result of this section.

\begin{theorem}\label{t2}
Let $\kappa$ be an infinite cardinal such that there exists a regular maximal tower $\Tau$ satisfying $\kappa^+<|\Tau|$. Then there exists a separable totally countably compact space $Q$ such that $Q$ is $Y$-rigid for any $T_1$ space $Y$ of pseudocharacter $\leq \kappa$.
\end{theorem}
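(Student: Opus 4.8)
The plan is to follow the three‑step scheme of the proof of Theorem~\ref{t1}, the new content being concentrated in the analogue of its Step~1. First I would reduce the theorem to the following: \emph{there is a regular separable totally countably compact space $X_0$ with distinct non-isolated points $a,b$ such that $\operatorname{Const}(X_0)_{a,b}$ contains every $T_1$ space $Y$ with $\psi(Y)\le\kappa$}, i.e.\ $f(a)=f(b)$ for every continuous $f\colon X_0\to Y$ of this kind. Granting such an $X_0$, put $Q:=cW(D_{a,b}(X_0))$. By Lemma~\ref{l0}, $D_{a,b}(X_0)$ is regular, separable and maps constantly into every such $Y$; since $X_0$ is regular, separable and countably compact, Lemma~\ref{l1} and Corollary~\ref{c1} give that $D_{a,b}(X_0)$ satisfies the hypotheses of Lemma~\ref{l3}, so $Q$ is regular, separable and countably compact; as $D_{a,b}(X_0)$ is dense in $Q$ and every target $Y$ is $T_1$ (so $\overline{\{y\}}=\{y\}$), every continuous $f\colon Q\to Y$ with $\psi(Y)\le\kappa$ is constant. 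Finally $Q$ is totally countably compact by repeating Step~3 of the proof of Theorem~\ref{t1} verbatim with $X_0$ in place of $J(X)$, using Lemma~\ref{l2}, Corollary~\ref{c1} and the total countable compactness of $X_0$.

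Next I would assemble the raw material for $X_0$. Write $\lambda_i:=|\Tau_i|$ for $i=1,2$ (we use below that $\lambda_i$ is regular, which we may assume), so $\kappa^+<\lambda_1<\lambda_2$. Let $F_i$ be the Franklin--Rajagopalan space of $\Tau_i=\{T^i_\alpha\}_{\alpha\in\lambda_i}$ on the ground set $\w$: $\w$ is discrete, open and dense, a basic neighbourhood of $\alpha\in\lambda_i$ is $(\gamma,\alpha]\cup(T^i_\alpha\setminus T^i_\gamma)$ for $\gamma<\alpha$, the remainder $\lambda_i\subseteq F_i$ is a closed copy of the ordinal, and maximality of $\Tau_i$ makes $F_i$ separable, locally compact, sequentially compact and Hausdorff. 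Its one‑point compactification $F_i^+$ is then a separable compact Hausdorff space in which $\w$ is dense, $\infty_i$ has uncountable character, the remainder $F_i^+\setminus\w$ is a closed copy of $\lambda_i+1$, and $F_i^+$ is still sequentially compact. Consequently $F_1^+\times F_2^+$ is separable, compact and sequentially compact, and contains the plank $(\lambda_1+1)\times(\lambda_2+1)$ as a closed subspace. Deleting the corner $c$ (the copy of $(\lambda_1,\lambda_2)$) gives a separable, locally compact, sequentially compact --- hence totally countably compact --- regular $T_1$ space $P=(F_1^+\times F_2^+)\setminus\{c\}$, inside which $T_{\lambda_1,\lambda_2}$ sits as a closed subspace. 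Since $\kappa^+<\lambda_1=\operatorname{cf}\lambda_1<\lambda_2=\operatorname{cf}\lambda_2$, Lemma~\ref{C} applies to the restriction of any continuous $f\colon P\to Y$ ($Y$ a $T_1$ space with $\psi(Y)\le\kappa$) to this copy of $T_{\lambda_1,\lambda_2}$: there are $y=y(f)\in Y$, $\alpha_0<\lambda_1$ and $\mu_0<\lambda_2$ with $f$ constantly equal to $y$ on $[\alpha_0,\lambda_1]\times[\mu_0,\lambda_2]\setminus\{c\}$; in particular $f$ is eventually constant with value $y$ along each of the two ``edges'' of $P$ accumulating at the missing corner.

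Then I would build $X_0$ by a Jones‑type construction over finitely many copies of $P$. Keeping the chain index set \emph{discrete} (as $\mathbb Z$ is in Step~1 of Theorem~\ref{t1}) makes the resulting tails clopen, which is exactly what makes a Jones‑type quotient regular; using copies of $P$ as the links is what lets Lemma~\ref{C} endow each link with a well‑defined ``corner value'' of $f$. One glues consecutive links along homeomorphic tails of their accumulating edges --- here alternating the roles of the two coordinates of $P$ from link to link, to cope with $\lambda_1+1\not\cong\lambda_2+1$ --- and attaches two points $a,b$ at the two ends of the chain, choosing their neighbourhood filters (as in Theorem~\ref{t1}) so that the resulting auxiliary space $R'$ is regular, separable and totally countably compact and so that $a,b$ are separated by open sets; $X_0$ is then the quotient of $R'$ by the gluing identifications, and being a continuous image of $R'$ it is automatically separable and totally countably compact. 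For continuous $f\colon X_0\to Y$ with $\psi(Y)\le\kappa$: Lemma~\ref{C} gives $f$ a constant value $y_i$ near the missing corner of the $i$‑th link, the gluings force the $y_i$ to coincide with a single $y$, and the neighbourhood filters of $a$ and $b$ together with continuity and $T_1$‑ness of $Y$ force $f(a)=y=f(b)$. Hence $Y\in\operatorname{Const}(X_0)_{a,b}$, completing the construction.

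The hard part will be this Step~1, namely meeting several competing demands on $X_0$ at once: (i) the ``chain direction'' must stay discrete so that tails are clopen and the quotient regular, while each link must nonetheless carry a genuine plank $T_{\lambda_1,\lambda_2}$ so that Lemma~\ref{C} is applicable; (ii) the gluing must be arranged so that the corner value really propagates from one link to the next in spite of $\lambda_1+1\not\cong\lambda_2+1$, and so that the two ends where $a,b$ are attached \emph{can} be separated by open sets --- the two edges of a single punctured plank cannot be separated (that is its non‑normality), which is precisely why more than one link is required; and (iii) one must verify that the quotient is regular by exhibiting, for each point and each of its neighbourhoods, a saturated open set whose saturated closure lies inside the neighbourhood, in the spirit of the regularity arguments in Lemma~\ref{l0} and in Step~1 of Theorem~\ref{t1}. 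Once $X_0$ is in hand, the passage through $D_{a,b}(\cdot)$ and $cW(\cdot)$ is routine, as indicated above.
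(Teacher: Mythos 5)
Your proposal follows the paper's own proof of Theorem~\ref{t2} essentially step for step: the Franklin--Rajagopalan spaces of the two towers, their one-point compactifications, the punctured product $\Pi=(X_1\times X_2)\setminus\{(\infty_1,\infty_2)\}$ containing a closed copy of the plank $T_{\mathfrak{t}_1,\mathfrak{t}_2}$, Lemma~\ref{C} to extract a ``corner value'' on each copy, the Jones machine to produce the pair $a,b$, and finally $Q=cW(D_{a,b}(\cdot))$; your reduction and the final passage through $D_{a,b}$ and $cW$ are exactly as in the paper. Two details of your Step~1, the part you flag as unfinished, should be corrected. First, the Jones machine must use $\mathbb Z$-many (not finitely many) copies of $\Pi$, with $a$ and $b$ attached at the two infinite ends of the chain so that their basic neighbourhoods contain whole clopen tails $\Pi\times\{-k:k>n\}$, resp.\ $\Pi\times\{k:k>n\}$; this is what makes $a$ and $b$ separable from each other and the quotient regular. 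With finitely many links you would have to attach $a$ and $b$ to tails of the non-separated edges of the end links, which runs into precisely the non-normality obstruction you yourself point out. Second, no ``alternation of the coordinates'' is needed or even possible: the Jones identification $(x,2n)\sim(x,2n+1)$ for $x\in A$ and $(y,2n)\sim(y,2n-1)$ for $y\in B$ glues each edge only to the corresponding edge of an adjacent identical copy, so the non-homeomorphism of $\mathfrak{t}_1+1$ and $\mathfrak{t}_2+1$ never enters. Finally, note that merging the corner values $y_n$ of the individual links into a single $y$ with $\{a,b\}\subset\overline{f^{-1}(y)}$ uses $\sup_{n\in\mathbb Z}\alpha_n<\mathfrak{t}_1$, i.e., the uncountable cofinality of the tower lengths (Lemma~\ref{C} already requires them to be regular, as you implicitly assume).
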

\begin{proof}
Fix an infinite cardinal $\kappa$ and a regular maximal tower $\Tau=\{T_{\alpha}\mid \alpha\in \lambda\}$ where $\kappa^+<\lambda$.
Consider the space $Y=\Tau\cup\w$ which is topologized as follows. Points of $\omega$ are isolated and basic open neighborhoods of $T\in \Tau$ have the form
$$B(S,T,F)=\{P\in\Tau\mid S\subset^* P\subseteq^* T\}\cup ((T\setminus S)\setminus F),$$
where $S\in \Tau\cup\{\emptyset\}$ satisfies $S\subset^{*}T$ and $F$ is a finite subset of $\omega$.

Repeating arguments used in~\cite[Example 7.1]{int} one can check that the space $Y$ is sequentially compact, separable, normal and locally compact.
Choose any point $\infty\notin Y$ and let $X=\{\infty\}\cup Y$ be the one-point compactification of the locally compact space $Y$.
Let $\Pi=X^2\setminus\{(\infty,\infty)\}$. It is easy to see that the space $\Pi$ is regular separable and sequentially compact. Consider the closed subsets $A=\{\infty\}{\times} \Tau$ and $B=\Tau{\times}\{\infty\}$. Note that the subspace $\Tau{\times}\Tau\cup A\cup B\subset \Pi$ is homeomorphic to the space $T_{\lambda,\lambda}$, defined just before Lemma~\ref{CC}.  Let us mention that the sets $A$ and $B$ can be separated by disjoint open neighborhoods. Nonetheless,
using Jones machine, we construct the space $J(\Pi)$ precisely as we did at the end of Section~\ref{V}.
 This time we shall use another argument to show that $f(a)=f(b)$ for any continuous map $f$ from $J(\Pi)$ into
 appropriate spaces.

It is easy to check that $J(\Pi)$ is separable regular and sequentially compact space.
Fix any $T_1$ space $Y$ with pseudocharacter $\leq \kappa$ and a continuous map $f:J(\Pi)\rightarrow Y$. We claim that $f(a)=f(b)$.  Recall that in the construction of $J(\Pi)$ by $R$ we denoted the space $\Pi{\times}\mathbb Z\cup\{a,b\}$ and by $\pi$ the quotient map from $R$ onto $J(\Pi)$.  Consider the map $f\circ \pi: R\rightarrow Y$. Note that for each $n\in\mathbb{Z}$ the subspace $(\Tau{\times}\Tau\cup A\cup B){\times} \{n\}$ is homeomorphic to $T_{\lambda,\lambda}$. Thus, Lemma~\ref{CC} implies that for each $n\in \mathbb Z$ there exist $y_n\in Y$ and ordinal $\alpha_n\in\lambda$ such that
$$\{(x,\infty,n)\mid x\in \Tau\setminus\{T_{\xi}\mid \xi\leq \alpha_n\}\}\cup \{(\infty,x,n)\mid x\in \Tau\setminus\{T_{\xi}\mid \xi\leq \alpha_n\}\}\subset \pi^{-1}(f^{-1}(y_n)).$$
Put $\alpha=\sup\{\alpha_n\mid n\in \mathbb Z\}$.
Since $\pi((\infty,x,2n))=\pi((\infty,x,2n+1))$ and $\pi((x,\infty,2n))=\pi((x,\infty,2n-1))$ for any $n\in \mathbb Z$ and $x\in\Tau$ we obtain that $y_{2n-1}=y_{2n}=y_{2n+1}$, $n\in \mathbb Z$. Hence there exists a unique $y\in Y$ such that
$$D=\pi(\{(x,\infty,n)\mid n\in\mathbb{Z} \hbox{ and } x\in \Tau\setminus\{T_{\xi}\mid \xi\leq \alpha\}\})\subset f^{-1}(y).$$
Observe that $\{a,b\}\subset \overline{D}\subset \overline{f^{-1}(y)}=f^{-1}(y)$. Hence any $T_1$ space $Y$ of pseudocharacter $\leq\kappa$ belongs to $\operatorname{Const}_{a,b}(J(\Pi))$.

Finally, consider the space $D_{a,b}(J(\Pi))$ and put $Q=cW(D_{a,b}(J(\Pi)))$.
Similarly as in the proof of Theorem~\ref{t1} it can be checked that the space $Q$ is regular, separable, totally countably compact and $Y$-rigid.
\end{proof}

Next we show that the assumption of Theorem~\ref{t2} consistently holds for many
$\kappa<\mathfrak c$ simultaneously. This fact may be thought of as a folklore, e.g., in private communication we have been informed by V. Fischer and J. Schilchan that they
have come across the same ideas. Nonetheless we give a somewhat detailed proof thereof using the \emph{Mathias forcing $\mathbb M_{\mathcal F}$ associated to a free filter $\mathcal F$}
on $\omega$.
$\mathbb M_{\mathcal F}$ consists of pairs
$\langle s,F\rangle$ such that $s\in [\omega]^{<\omega}$, $F\in\mathcal F$, and $\max s<\min F$.
A condition $\langle s,F\rangle$ is stronger than $\langle s',F'\rangle$
if $F\subset F'$, $s$ is an end-extension of $s'$, and
$s\setminus s'\subset F'$.
This poset introduces the generic subset $X\in [\omega]^\omega$ such that $X\subset^* F$
for all $F\in\mathcal F$, namely $X=\bigcup\{s\: :\: \exists F\in\mathcal F\:( \langle s, F\rangle\in G) \}$,
where $G$ is $\mathbb M_{\mathcal F}$ generic.

\begin{proposition}\label{Zd}
 Let $V$ be a model of GCH and  $\kappa$  a cardinal of uncountable cofinality. Then there is a ccc poset $\mathbb P$
 such that in $V^{\mathbb P}$, $2^\omega=\kappa$ and for every regular cardinal $\lambda\leq \kappa$ there is a tower
 $\{T^\lambda_\xi:\xi<\lambda\}$.
\end{proposition}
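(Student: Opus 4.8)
The plan is to force with a finite-support iteration of Mathias forcings, one long block of them for each regular cardinal $\lambda\le\kappa$. Concretely, working in $V$ (which satisfies GCH), I would fix a partition of the ordinals $<\kappa$ into pairwise disjoint sets $S_\lambda$, one for each regular uncountable $\lambda\le\kappa$, with $S_\lambda$ of order type $\lambda$ (possible since there are at most $\kappa$ regular cardinals $\le\kappa$), and define $\mathbb P=\langle\mathbb P_\alpha,\dot{\mathbb Q}_\alpha:\alpha<\kappa\rangle$ by: for $\alpha\in S_\lambda$, $\dot{\mathbb Q}_\alpha=\mathbb M_{\mathcal F^\lambda_\alpha}$, where $\mathcal F^\lambda_\alpha$ is the filter on $\omega$ generated by the cofinite sets together with $\{A^\lambda_\beta:\beta\in S_\lambda\cap\alpha\}$ and $A^\lambda_\beta\in[\omega]^\omega$ is the Mathias generic real added at stage $\beta$. (Stages outside $\bigcup_\lambda S_\lambda$, if any, carry trivial forcing.) As the analysis below will show, each family $\{A^\lambda_\beta:\beta\in S_\lambda\cap\alpha\}$ is $\subseteq^*$-decreasing, so every finite subfamily has infinite intersection and $\mathcal F^\lambda_\alpha$ is a proper filter.

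Two routine points: $\mathbb P$ is ccc, and $2^\omega=\kappa$ in $V^{\mathbb P}$. For the first, each $\mathbb M_{\mathcal F}$ is $\sigma$-centered, since two conditions $\langle s,F\rangle,\langle s,F'\rangle$ with the same stem $s$ have the common extension $\langle s,F\cap F'\rangle$ (here $\mathcal F$ being a filter is used), and finite-support iterations of ccc posets are ccc; hence $\mathbb P$ preserves cardinals and cofinalities, so ``regular cardinal $\le\kappa$'' is absolute between $V$ and $V^{\mathbb P}$. For the second, a routine induction using GCH gives $|\mathbb P|\le\kappa$, so $\mathbb P$ has at most $\kappa^{\aleph_0}$ nice names for subsets of $\omega$, and $\kappa^{\aleph_0}=\kappa$ because $\operatorname{cf}(\kappa)>\omega$ and GCH holds; thus $2^\omega\le\kappa$, while the iteration clearly adds $\ge\kappa$ distinct reals (the generics $A^\kappa_\beta$ if $\kappa$ is regular, else $\bigcup_{\nu<\kappa}\{A^{\nu^+}_\beta:\beta\in S_{\nu^+}\}$), so $2^\omega=\kappa$.

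The towers come straight from the basic behaviour of Mathias forcing: the generic real $X$ of $\mathbb M_{\mathcal F}$ satisfies $X\subseteq^* F$ for all $F\in\mathcal F$, and for each $F\in\mathcal F$ and $k\in\omega$ the conditions $\langle s,F'\rangle$ omitting from $s$ at least $k$ elements of $F$ below $\max s$ are dense, so $|F\setminus X|=\omega$ (with $F=\omega$: $X$ is infinite and coinfinite). Applying this at every stage, for a fixed regular uncountable $\lambda\le\kappa$, listing $S_\lambda$ increasingly as $\langle\alpha_\xi:\xi<\lambda\rangle$ and setting $A^\lambda_\xi:=A^\lambda_{\alpha_\xi}$, the sequence $\langle A^\lambda_\xi:\xi<\lambda\rangle$ is $\subseteq^*$-decreasing with $|A^\lambda_\eta\setminus A^\lambda_\xi|=\omega$ for $\eta<\xi$; hence $T^\lambda_\xi:=\omega\setminus A^\lambda_\xi$ defines a $\subseteq^*$-increasing sequence of infinite subsets of $\omega$ with $|T^\lambda_\xi\setminus T^\lambda_\eta|=\omega$ for $\eta<\xi$, i.e.\ a tower of length $\lambda$ in the sense of the beginning of this section. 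Being such a tower is upward absolute, so $\{T^\lambda_\xi:\xi<\lambda\}$ is still one in $V^{\mathbb P}$, and the proposition is proved.

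The step needing genuine care — and what the application to Theorem~\ref{t2} actually requires — is to make the towers \emph{maximal}. For that I would interleave a diagonalization: at a stage $\alpha\in S_\lambda$ the bookkeeping also hands me a $\mathbb P_\alpha$-name $\dot B$ for an infinite subset of $\omega$, and I use for the Mathias step the filter generated by the tower-so-far together with $\omega\setminus\dot B$ whenever that is still proper (otherwise $\dot B\in\mathcal F^\lambda_\alpha$ already); in either case the new generic $A^\lambda_\alpha$ satisfies $|\dot B\setminus A^\lambda_\alpha|=\omega$, so $\dot B$ cannot be a pseudo-intersection of $\langle A^\lambda_\xi:\xi<\lambda\rangle$. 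The main obstacle will be that for $\lambda<\kappa$ the block $S_\lambda$, having order type $\lambda<\kappa$, cannot deal with all $\kappa$ names appearing over the whole iteration, so one needs a preservation lemma: a later Mathias step $\mathbb M_{\mathcal G}$ whose filter $\mathcal G$ is generated by a tower of a \emph{different} index $\lambda'$ (plus finitely many complements from diagonalizing at $\lambda'$) adds no pseudo-intersection to the already-completed tower $\langle A^\lambda_\xi:\xi<\lambda\rangle$, because generically neither any $A^\lambda_\xi$ nor any $\omega\setminus A^\lambda_\xi$ belongs to $\mathcal G$, so the new generic is split by every $A^\lambda_\xi$; and this must be propagated through the finite-support iteration. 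Establishing that preservation would be the technical heart; granted it, the bookkeeping closes off in the usual way and all the towers $\{T^\lambda_\xi:\xi<\lambda\}$ are maximal in $V^{\mathbb P}$.
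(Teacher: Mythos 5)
Your argument does establish the proposition as literally stated: with the paper's definition, a ``tower'' is just a $\subset^*$-increasing $\lambda$-chain in $[\omega]^\omega$ with infinite differences, such a chain is upward absolute, and your density arguments for $X\subset^*F$ and $|F\setminus X|=\omega$, as well as the ccc and $2^\omega=\kappa$ computations, are fine. However, as you yourself point out, the only purpose of this proposition is to make the hypothesis of Theorem~\ref{t2} consistent, which needs the towers to be \emph{maximal}; the paper's proof does deliver maximality, so that is the reading you should aim for, and on that reading your proof has exactly the gap you flag. The paper closes that gap not with a diagonalization-plus-preservation scheme but with a different global architecture: it forces with $\mathbb P_0*\dot{\mathbb P}_1$, where $\mathbb P_0$ adds $\kappa$ Cohen reals (so that $\mathfrak c=\kappa$ holds before the Mathias stages begin, which is what makes each length-$\lambda$ finite-support iteration of $\sigma$-centered posets again $\sigma$-centered), and $\dot{\mathbb P}_1$ is the finite-support \emph{product} $\prod_{\lambda}\mathbb Q^\lambda$ of the block iterations rather than a single interleaved iteration of length $\kappa$. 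The payoff of the product is that for each fixed $\lambda$ one can rewrite $\mathbb P$ as $\mathbb P_0*(\prod_{\nu\neq\lambda}\dot{\mathbb Q}^\nu\times\dot{\mathbb Q}^\lambda)$; by ccc-ness and regularity of $\lambda$, every real $g$ of the final model already lies in $V[G_0][G_{\neq\lambda}][G^\lambda\uhr\xi]$ for some $\xi<\lambda$, the Mathias generic $A^\lambda_\xi$ added at the next stage of $\mathbb Q^\lambda$ is generic over that entire model by the product lemma, and a straightforward density argument shows its enumerating function $h^\lambda_\xi$ exceeds $g$ infinitely often. Hence $\{h^\lambda_\beta:\beta<\lambda\}$ is unbounded, so $\{A^\lambda_\beta:\beta<\lambda\}$ has no infinite pseudointersection, which is precisely the maximality of the tower $\{\omega\setminus A^\lambda_\beta:\beta<\lambda\}$ --- no bookkeeping of names and no preservation lemma are required. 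I would advise replacing your interleaved iteration by this product decomposition rather than attempting the preservation lemma you describe: controlling pseudointersections added by Mathias steps of other blocks and by the Cohen-like tails of a finite-support iteration is genuinely delicate, and your blocks $S_\lambda$ for $\lambda<\mathrm{cf}(\kappa)$ are in any case too short to diagonalize against all $\kappa$ relevant names.
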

\begin{proof}
$\mathbb P$ will be a two step iteration $\mathbb P_0*\dot{\mathbb P}_1$, where $\mathbb P_0=Fin(\kappa\times\omega,2)$ is the standard Cohen
poset adding $\kappa$-many Cohen reals. Let $\Lambda$ be the set of all regular cardinals $\lambda\leq\kappa$.
$\dot{\mathbb P}_1$ is a ($\mathbb P_0$-name for) the finitely supported product $\prod_{\lambda\in\Lambda}\mathbb Q^\lambda$, where $\mathbb Q^\lambda=\mathbb Q^\lambda_\lambda$ in  the finite support  iteration $\langle \mathbb Q^\lambda_\alpha,\dot{\mathbb R}^\lambda_\beta:\beta<\lambda, \alpha\leq\lambda\rangle$  defined as follows.
Suppose that we have already constructed $\langle \mathbb Q^\lambda_\alpha,\dot{\mathbb R}^\lambda_\beta:\beta<\xi, \alpha\leq\xi\rangle$ for some
$\xi<\lambda$, giving rise to a sequence
$\langle \dot{A}^\lambda_\beta:\beta<\xi\rangle$ of $\mathbb Q^\lambda_\xi$-names for infinite subsets of $\omega$ such that
$1_{\mathbb Q^\lambda_\xi}$ forces $\langle \dot{A}^\lambda_\beta:\beta<\xi\rangle$ to be    $\subset^*$-decreasing.
Then $\dot{\mathbb R}^\lambda_\xi$ is the $\mathbb Q^\lambda_\xi$-name for the Mathias forcing
$\mathbb M_{\dot{\mathcal F}^\lambda_\xi}$, where  $\dot{\mathcal F}^\lambda_\xi$ is the name for the filter generated
by $\langle \dot{A}^\lambda_\beta:\beta<\xi\rangle$, and $\dot{A}^\lambda_\xi$ is the $\mathbb M_{\dot{\mathcal F}^\lambda_\xi}$-generic real.
This completes our definition of $\mathbb P$. In $V^{\mathbb P_0}$, every $\mathbb Q^\lambda$ is $\sigma$-centered being
an iteration of $\sigma$-centered posets of length at most continuum, see \cite{Movf}. Thus $\mathbb P_1$ is ccc in  $V^{\mathbb P_0}$
being a finite support product of $\sigma$-centered posets, and hence $\mathbb P$ is ccc as well. A standard counting of nice names for reals
shows that $2^\omega=\kappa$ in $V^{\mathbb P}$.

Let $G$ be $\mathbb P$-generic filter and $A^\lambda_\beta:=(\dot{A}^\lambda_\beta)^G$ for all $\lambda\in\Lambda$ and $\beta<\lambda$.
Thus in $V[G]$, $\langle A^\lambda_\beta:\beta<\lambda\rangle$ has the property that $A^\lambda_\beta\subset^* A^\lambda_\xi$ for all  $\xi<\beta<\lambda$. Let $h^\lambda_\beta$  be the enumerating function of $A^\lambda_\beta$, i.e., the unique increasing bijection from
$\omega$ onto $A^\lambda_\beta$. Since for every $\lambda\in\Lambda$ the poset $\mathbb P$ can be written in the form $\mathbb P_0*(\prod_{\nu\in\Lambda\setminus\{\lambda\}}\dot{\mathbb Q}^\nu\times\dot{\mathbb Q}^\lambda)$, we conclude that the family
$\{h^\lambda_\beta:\beta<\lambda\}$ is unbounded in $V[G]$, and hence the family $\{A^\lambda_\beta:\beta<\lambda\}$ has no infinite pseudointersection.
It follows that $\{\omega\setminus A^\lambda_\beta:\beta<\lambda\}$ is a tower of length $\lambda$ in $V[G]$.
\end{proof}

\begin{remark}
It has been pointed out to us by the referee that Proposition~\ref{Zd} is a corollary of Theorem 3.3 from~\cite{NV}. Namely, this theorem allows us to produce a model of ZFC with $2^{\omega}=\kappa$ for any given in advance cardinal $\kappa$ of uncountable cofinality, such that for any regular $\lambda\leq \kappa$ there exists an unbounded and non-dominating set $\{f_{\alpha}^{\lambda}:\alpha<\lambda\}$, well-ordered by $\leq^*$. Given any witness $g_{\lambda}$ of the non-domination of the family $\{f_{\alpha}^{\lambda}:\alpha<\lambda\}$, it can be checked that $\{A_{\alpha}^{\lambda}:\alpha<\lambda\}\cap[\omega]^{\omega}$ is an unbounded tower, where $A_{\alpha}^{\lambda}=\{n\in\omega: f_{\alpha}^{\lambda}(n)> g_{\lambda}(n)\}$. We have nonetheless decided to include the proof of Proposition~\ref{Zd} in order to make the paper more self-contained.
\end{remark}

Theorem~\ref{t2} and Proposition~\ref{Zd} imply the following:
\begin{theorem}\label{tt2}
It is consistent with ZFC that for every cardinal $\kappa<\mathfrak{c}$ there exists a regular separable totally countably compact space $Q$ such that $Q$ is $Y$-rigid for any $T_1$ space $Y$ of pseudocharacter $\leq \kappa$.
\end{theorem}

 However we don't know an answer to the following problem:

\begin{problem}
(CH) Does there exists a regular separable totally countably compact space which is $Y$-rigid with respect to any $T_1$ space $Y$ of countable pseudocharacter?
\end{problem}

\begin{remark}
Theorem~\ref{tt2} cannot be generalized to the case $\kappa=\mathfrak{c}$. Indeed, let $Y$ be a regular separable space.
Then $\psi(Y)\leq 2^\omega=\mathfrak c$ by inequality 2.7, page 15 of~\cite{Ju}, and thus
the identity map $\operatorname{id}:Y\rightarrow Y$ is continuous and non-constant.
\end{remark}

\begin{remark} \label{rem_sep}
The answer to Problem~\ref{p2} in the case of separable spaces is negative. For this observe that the cardinality of any regular separable space is bounded by the cardinal $2^{\mathfrak{c}}$. Thus there exists a set $S$ consisting of separable regular spaces such that any separable regular space is homeomorphic to some element of $S$. Consider the Tychonoff product $\Pi S$. Obviously the space $\Pi S$ is regular and for each regular separable space $X$ there exists a non-constant (even injective) continuous map $f:X\rightarrow \Pi S$.
\end{remark}

\section{Around the problem of Nyikos}

First we answer in the affirmative Problem~\ref{p3}, the latter being variation of Nyikos' problem obtained by weakening the
disjunction of first countability and countable compactness to the sequential compactness.

\begin{theorem}\label{t4}
There exists a regular separable sequentially compact non-functionally Hausdorff space $S$ within ZFC.
\end{theorem}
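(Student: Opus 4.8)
The plan is to reduce the statement to producing, \emph{in ZFC}, a regular, separable, sequentially compact space $\Pi$ that fails to be normal, and then to obtain $S$ from $\Pi$ by feeding it into Jones' machine exactly as in Step~1 of the proof of Theorem~\ref{t1}. Concretely, granting such a $\Pi$ I would fix disjoint closed subsets $A,B\subseteq\Pi$ which cannot be separated by disjoint open sets, form $R=\Pi\times\mathbb Z\cup\{a,b\}$ with the topology $\tau$ of Step~1 of Theorem~\ref{t1}, carry out the same gluing ($(x,2n)\sim(x,2n+1)$ for $x\in A$ and $(y,2n)\sim(y,2n-1)$ for $y\in B$), and put $S=J(\Pi)=R/{\sim}$.

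Most of the required properties of $S$ are then read off as in Theorem~\ref{t1}, with a single addition. That $S$ is regular, and that $\R\in\operatorname{Const}_{a,b}(J(\Pi))$ --- hence that no continuous real-valued function on $S$ separates $a$ from $b$, so that $S$ is not functionally Hausdorff and in particular not Tychonoff --- follows word for word from Step~1 of Theorem~\ref{t1}, since that argument uses only regularity of the base space and the inseparability of the pair $A,B$. Separability of $S$ is immediate: if $D\subseteq\Pi$ is countable and dense, then $D\times\mathbb Z\cup\{a,b\}$ is a countable dense subset of $R$, and $S$ is a continuous image of $R$. The only point not already covered is sequential compactness of $S$; since a quotient of a sequentially compact space is sequentially compact, it suffices that $R$ be sequentially compact, which is easy: given a sequence in $R$, pass to a subsequence whose $\mathbb Z$-coordinate is either constant or strictly monotone, and use sequential compactness of the clopen copy $\Pi\times\{n\}\cong\Pi$ in the first case, while in the second case the description of the neighbourhoods of $a$ and $b$ shows that the subsequence converges to $a$ or to $b$.

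The heart of the matter, and the step I expect to be the main obstacle, is the construction in ZFC of a regular, separable, sequentially compact, non-normal space $\Pi$. Here I would follow the spirit of Section~4 and exploit the non-preservation of normality by products: since a product of two sequentially compact (resp.\ separable) spaces is sequentially compact (resp.\ separable), it suffices to produce two separable, sequentially compact, regular ``Franklin--Rajagopalan-type'' spaces $X_1,X_2$ --- one-point compactifications of spaces of the form $\Tau_i\cup\omega$, or close relatives --- such that a space built from them, e.g.\ $X_1\times X_2$ or the punctured product $(X_1\times X_2)\setminus\{(\infty_1,\infty_2)\}$ as in the proof of Theorem~\ref{t2}, is not normal but remains separable and sequentially compact. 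The difficulty, as compared with Theorem~\ref{t2}, is that in ZFC one has only a single regular length, namely $\mathfrak t$, available for maximal towers (under CH every maximal tower has length $\omega_1$), so the inseparable pair of closed sets certifying non-normality cannot simply be taken to be the edges of a ``Tychonoff plank'' $T_{\mathfrak t_1,\mathfrak t_2}$ with mismatched cofinalities; a more careful choice of the two factors --- or replacing one of them by a genuinely different separable sequentially compact space, such as the one-point compactification of a Mr\'owka $\Psi$-space over a MAD family --- is needed to break normality outright. Having obtained $\Pi$, one still verifies, as in the proof of Theorem~\ref{t2}, that it is regular, separable and sequentially compact; the sequential compactness is the delicate verification, since deleting a point could a priori destroy it, but maximality of the underlying towers guarantees that every sequence of isolated points already has a subsequence converging to a point other than the deleted one.
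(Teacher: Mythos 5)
Your reduction is exactly the paper's: build a regular separable sequentially compact non-normal space in ZFC, then run it through Jones' machine, and your verifications of regularity, separability, sequential compactness and non-functional-Hausdorffness of the quotient are all fine. But the proposal stops short of the actual content of the theorem: you explicitly flag the ZFC construction of the non-normal space $\Pi$ as ``the main obstacle'' and offer only candidate strategies (mismatched towers, which you correctly note are unavailable in ZFC, or a Mr\'owka-type factor) without carrying any of them out. As written this is a genuine gap, not a proof.

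The paper's resolution of precisely this obstacle is worth recording, since it sidesteps the need for two towers of different lengths altogether. Take a \emph{single} maximal tower of regular length $\kappa$ (such towers exist in ZFC), let $Y$ be the associated Franklin--Rajagopalan space (separable, regular, locally compact, sequentially compact), and let $Y^*$ be its one-point compactification. The space is the \emph{full} product $Z=Y\times Y^*$ --- no point is deleted, so separability and sequential compactness are immediate and your worry about puncturing does not arise. Non-normality comes from the fact that $Y$ contains a closed copy of the ordinal $\kappa$ and $Y^*$ a closed copy of $\kappa+1$, so $Z$ contains a closed copy of $K=\kappa\times(\kappa+1)$; the disjoint closed sets $\{(\alpha,\alpha)\mid\alpha<\kappa\}$ and $\{(\alpha,\kappa)\mid\alpha<\kappa\}$ cannot be separated, which the paper proves by noting that $K$ is pseudocompact, applying Glicksberg's theorem to get $\beta(K)=(\kappa+1)\times(\kappa+1)$, and observing that $(\kappa,\kappa)$ lies in the closure of both sets there. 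This is the generalization of the classical non-normality of $\omega_1\times(\omega_1+1)$, and it is the one idea your proposal is missing: the two factors need not be ``mismatched'' at all --- pairing a countably compact non-compact space with its own compactification already destroys normality of the product. Your alternative suggestion (one-point compactification of a $\Psi$-space) would on its own produce a compact Hausdorff, hence normal, factor, so it too would ultimately have to be paired with a non-compact partner and the non-normality of that product proved; you give no such argument.
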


\begin{proof}
Fix any maximal tower $\Tau=\{T_{\alpha}\mid \alpha\in \kappa\}$ on $\omega$ and let $Y$ be the space of Franklin and Rajagopalan which corresponds to the tower $\Tau$ (see the space $Y$ in the proof of Theorem~\ref{t2} or Example 7.1 from~\cite{int}). Recall that the space $Y$ is separable normal locally compact and sequentially compact. By $Y^*$ we denote the one point compactification of $Y$. Observe that the Tychonoff product of $Z=Y{\times}Y^*$ is separable regular and sequentially compact.

\begin{claim}
The space $Z$ is not normal.
\end{claim}
\begin{proof}
Recall that the space $Y$ contains a closed homeomorphic copy of the cardinal $\kappa$ and the space $Y^*$ contains  a closed homeomorphic copy of the ordinal $\kappa+1$. Therefore the space $Z$ contains a closed homeomorphic copy of the Tychonoff product $K=\kappa{\times}(\kappa+1)$. It remains to show that that the space $K$ is not normal. To derive a contradiction, assume that $K$ is normal. Consider the closed disjoint subsets $A=\{(\alpha,\alpha)\mid \alpha\in \kappa\}$ and $B=\{(\alpha,\kappa)\mid \alpha\in \kappa\}$ of $K$. Since $K$ is pseudocompact, Glicksberg's Theorem (see \cite[Exercise 3.12.20(c)]{Eng}) implies that $\beta(K)=\beta(\kappa){\times}\beta(\kappa+1)=(\kappa+1){\times}(\kappa+1)$. Since the space $K$ is normal, Corollary 3.6.4 from~\cite{Eng} provides that $\cl_{\beta(K)}(A)\cap \cl_{\beta(K)}(B)=\emptyset$. However, it is easy to see that $(\kappa,\kappa)\in \cl_{\beta(K)}(A)\cap \cl_{\beta(K)}(B)$ which implies a contradiction.
\end{proof}

Finally, using Jones machine (see the end of Section~\ref{V}) we construct the space $S=J(Z)$ which is regular separable sequentially compact but not functionally Hausdorff.
\end{proof}

Recall that $X$ is called a Nyikos space, if $X$ is regular separable first countable and countably compact.
The rest of this section is devoted to the  following theorem whose proof will be divided into several steps
and rely onto the example of a non-normal Nyikos space constructed by Nyikos and Vaughan in~\cite{NV} under $\mathfrak{t}=\omega_1$. We shall need  the set-theoretic assumption
 ``$\omega_1=\mathfrak{t}<\mathfrak{b}=\mathfrak{c}$ and there exists a simple $P_{\mathfrak{c}}$-point'' which we
 denote by  $\varheart$.

\begin{theorem}\label{t6}
$(\varheart)$ There exists an $\R$-rigid Nyikos space.
\end{theorem}

Theorem~\ref{t6} implies that the affirmative answer to the strongest version of Problem~\ref{p1} is consistent, as
well as gives a consistent counterexample to the Nyikos' problem which is $\R$-rigid.

 Recall that an ultrafilter $\mathcal F$ on $\omega$ is called a \emph{simple $P_\kappa$-point}, where $\kappa$ is a regular cardinal, if there exists a basis $\{F_\alpha:\alpha<\kappa\}$ of $\mathcal F$ such that $\{\omega\setminus F_{\alpha}: \alpha<\kappa\}$ is a tower of length $\kappa$.
The following fact shows that $\varheart$ is consistent with ZFC and seems to be a kind of folklore. We are grateful to A. Dow for sharing with us the proof sketched below.

\begin{proposition}\label{ZZ}
$\varheart$ is consistent.
\end{proposition}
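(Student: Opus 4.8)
The plan is to build the desired poset as a finite-support iteration over a model of GCH, combining the three requirements of $\varheart$ into one construction. First I would recall the classical ingredients: to obtain a simple $P_{\mathfrak c}$-point together with $\mathfrak b=\mathfrak c$ one iterates, with finite support and length $\omega_2$ (working say over a model of $2^\omega=\omega_1$, $2^{\omega_1}=\omega_2$), a bookkeeping mixture of Mathias-type forcings $\mathbb M_{\mathcal F}$ that extend an increasing tower of length $\omega_2$ and dominating (Hechler) reals that take care of $\mathfrak b=\mathfrak c=\omega_2$; the Mathias generics $\langle \dot A_\alpha:\alpha<\omega_2\rangle$ form a $\subseteq^*$-decreasing chain whose generated filter is forced to be an ultrafilter by a genericity/bookkeeping argument enumerating all nice names for subsets of $\omega$. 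This is precisely the kind of $\sigma$-centered finite-support iteration already used in Proposition~\ref{Zd}, so ccc-ness and the computation $2^\omega=\omega_2$ via counting nice names go through verbatim.

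The key point to check is that $\mathfrak t=\omega_1$ survives this iteration, i.e.\ that we have \emph{not} accidentally made $\mathfrak t$ large. The standard way to guarantee $\mathfrak t=\omega_1$ in a finite-support iteration of length $\omega_2$ is to note that the Cohen reals added at limit stages of countable cofinality kill towers: more precisely, any $\subseteq^*$-decreasing $\omega_1$-sequence of reals appearing in the final model already appears at some stage $<\omega_2$ by a reflection/$\Delta$-system argument (the iteration is ccc, so $\omega_1$-sequences are captured on a club), and a Cohen real over that intermediate model is almost-contained in none of the members, showing the sequence has no pseudo-intersection. Hence it suffices to interleave cofinally many Cohen reals; since finite-support iterations of nontrivial forcings add Cohen reals at limits anyway, $\mathfrak t\leq\omega_1$ holds automatically, and $\mathfrak t\geq\omega_1$ is just $\mathrm{ZFC}$. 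Thus $\omega_1=\mathfrak t<\mathfrak b=\mathfrak c=\omega_2$ in the extension, and the tower of length $\mathfrak c$ generating the ultrafilter witnesses the simple $P_{\mathfrak c}$-point.

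I expect the main obstacle to be the bookkeeping that simultaneously (a) ensures the Mathias generics diagonalize \emph{every} subset of $\omega$ in the final model (so that the generated filter is a genuine ultrafilter and not merely a filter base of a tower), and (b) is compatible with interleaving the Hechler reals needed for $\mathfrak b=\mathfrak c$ without destroying the tower structure of the $P_{\mathfrak c}$-point basis. The delicate part is that when we force with $\mathbb M_{\dot{\mathcal F}_\xi}$ the filter $\dot{\mathcal F}_\xi$ must still be a filter (its generating tower must remain $\subseteq^*$-decreasing and proper), which is automatic, but one must verify that adding dominating reals in between does not interfere with an $\dot A_\alpha$ being almost-contained in the earlier ones — this is fine because the Mathias real is explicitly built to refine all members of the filter it is associated with. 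A clean exposition would cite the relevant lemmas (ccc, $\sigma$-centeredness of $\mathbb M_{\mathcal F}$ for a filter, preservation of $\mathfrak t=\omega_1$ by Cohen reals) from \cite{Movf} or a standard reference and then simply describe the bookkeeping function; I would not carry out the name-counting or the $\Delta$-system computation in detail.
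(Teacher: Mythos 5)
Your architecture for the iteration itself---a finite-support, length-$\omega_2$ mixture of Hechler forcing and Mathias forcing $\mathbb M_{\mathcal F}$ along a growing ultrafilter whose generics form a $\subseteq^*$-decreasing base, yielding $\mathfrak b=\mathfrak c=\omega_2$ and a simple $P_{\omega_2}$-point, with ccc-ness via $\sigma$-centeredness---is exactly the paper's $\mathbb P_1$, and the parts of your proposal about bookkeeping and about the compatibility of the Hechler and Mathias steps are fine.

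The gap is your argument for $\mathfrak t=\omega_1$, which is precisely where something nontrivial must happen. First, the observation that a Cohen real over an intermediate model is almost-contained in no member of a tower from that model only shows that \emph{this particular real} is not a pseudo-intersection; it says nothing about reals added afterwards, and the later Mathias generics are introduced exactly in order to \emph{be} pseudo-intersections of earlier towers. Indeed, the conclusion you draw---that every $\subseteq^*$-decreasing $\omega_1$-sequence in the final model has no pseudo-intersection---is false in your own model: the initial segment $\{A_\beta:\beta<\omega_1\}$ of the $P_{\omega_2}$-point's base is an $\omega_1$-tower with pseudo-intersection $A_{\omega_1}$. Second, the claimed ``standard fact'' that finite-support iterations force $\mathfrak t\leq\omega_1$ automatically because they add Cohen reals at limit stages cannot be a theorem: the usual finite-support iteration producing a model of MA adds Cohen reals at limits and satisfies $\mathfrak t=\mathfrak c>\omega_1$. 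So you still owe a proof that \emph{some} $\omega_1$-tower in your model is maximal, and for the specific Hechler-plus-Mathias iteration this would require a genuine preservation argument. The paper sidesteps the issue entirely by a cardinal-arithmetic device: before the ccc iteration it forces with the countably closed poset adding $\omega_3$ Cohen subsets of $\omega_1$, so that $2^{\omega_1}=\omega_3$ survives into the final model, where $\mathfrak c=\omega_2$. Since $2^\kappa=\mathfrak c$ for every infinite $\kappa<\mathfrak p$, the inequality $2^{\omega_1}>\mathfrak c$ forces $\mathfrak p=\omega_1$, hence $\mathfrak t=\omega_1$, with no analysis of the iteration at all. In your setup, with $2^{\omega_1}=\omega_2$ in the ground model, this soft argument is unavailable and the burden of establishing $\mathfrak t=\omega_1$ falls back on the combinatorics of the iteration, where your proposal currently has no valid argument.
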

\begin{proof}
Let $V$ be a model  of GCH and $\mathbb P_0$ be a poset adding $\omega_3$ Cohen subsets of $\omega_1$ with countable supports.
Thus $2^\omega=\omega_1$ and $2^{\omega_1}=\omega_3$ holds in $V^{\mathbb P_0}$. In $V^{\mathbb P_0}$, let $\mathbb P_1$ be a finite support iteration of
length $\omega_2$ of ccc posets of size $\omega_1$ producing a simple $P_{\omega_2}$-point and forcing in addition $\mathfrak b=\mathfrak c=\omega_2$.
For this it is enough that we take cofinally often Hechler forcing as well as the Mathias forcing for ultrafilters which are made always larger
by including into the next ones the Mathias generics for the previous ones.

Since $2^{\omega_1}=\omega_3>\mathfrak c=\omega_2$ in $V^{\mathbb P_0*\dot{\mathbb P}_1}, $ we conclude that
$\mathfrak p=\omega_1$ in this model because $2^\kappa=\mathfrak c$ for any infinite $\kappa<\mathfrak p$.
The latter statement is also well-known and  follows from the existence of a countable dense subset $Q$ of $2^\kappa$ as well as the fact that
each point $x$ of $2^\kappa$ is a limit of a sequence in $Q$ convergent to $x$ because $2^\kappa$ has character $\kappa<\mathfrak p$ (see \cite[p.130]{int}),
which gives that $2^\kappa\leq |Q|^\omega=\mathfrak c$.
\end{proof}

Observe that the extension $D_{a,b}(X)$ does not preserve first countability. Therefore we shall use a suitable modification of this extension defined below.
Let $X$, $a$, $b$, $\mathcal B(a)$, $\mathcal B(b)$, $Z$, $D$, $D_n$, $E$, $b^*$, $f:E\to\omega$, $\sim$ and $\pi$
be such as at the beginning of Section~\ref{V}. In addition, we shall assume that $X$ is first countable.  Next we introduce two kinds of sets in $Z$ which will be useful in the definition of the topology on $Z/_{\sim}$ which is weaker than that of $D_{a,b}(X)$.

$\bullet$ For any $n\in\omega$ and subsets $U\subset X_n\setminus\{a_n,b_n\},$ $V\subset X\setminus\{b\}$ such that $\operatorname{int}(V)\in\mathcal B(a)$ we shall inductively construct the subset $[U]_V\subset Z$ as follows: let $U_{(0)}=\emptyset$, $U_{(1)}=U$ and assume that for some $n\in\omega$ the sets $U_{(k)}$, $k\leq n$ are already constructed. Put $U^*_{(n)}= (U_{(n)}\setminus U_{(n-1)})\cap E$ and $U_{(n+1)}=\cup\{V_{k}\mid k\in f(U^*_{(n)})\}\cup U_{(n)}$. Finally, let $[U]_V=\cup_{n\in\omega}U_{(n)}$.

$\bullet$ For any subsets $U\subset X\setminus\{a\}$ and $V\subset X\setminus\{b\}$ such that $\operatorname{int}(U)\in\mathcal{B}(b)$ and $\operatorname{int}(V)\in\mathcal{B}(a)$ we shall inductively construct the subset $[U_{b}]_V\subset Z$ as follows: let $U_{(0)}=\emptyset$, $U_{(1)}=\cup_{n\in\omega}U_{n}$ and assume that for some $n\in\omega$ the sets $U_{(k)}$, $k\leq n$ are already constructed. Put $U^*_{(n)}= (U_{(n)}\setminus U_{(n-1)})\cap E$ and $U_{(n+1)}=\cup\{V_{k}\mid k\in f(U^*_{(n)})\}\cup U_{(n)}$. Finally, let $[U_{b}]_V=\cup_{n\in\omega}U_{(n)}$.

 Let $\E$ be the quotient set $Z/_{\sim}$ endowed with the topology $\tau$ generated by the base
\begin{equation*}
  \begin{aligned}
    \mathcal{B}(\tau)=\{\pi([U_{n}]_V)\mid & n\in\omega,\hbox{ } U\hbox{ is an open subset of }X\setminus\{a,b\}\hbox{ and }V\in\mathcal{B}(a)\}\cup\\
&\cup\{\pi([U_{b}]_V\mid U\in \mathcal{B}(b) \hbox{ and }V\in\mathcal{B}(a)\}.
  \end{aligned}
\end{equation*}
 Observe that if $U\subset X\setminus\{a,b\}$ is open and $V\in\mathcal B(a)$, then $[U_{n}]_V$ is an open saturated subset of $Z$ for each $n\in\omega$. Similarly, if $U\in \mathcal B(b)$ and $V\in\mathcal B(a)$, then $[U_{b}]_V$ is an open saturated subset of $Z$.
 Hence the topology $\tau$ is weaker than the quotient topology on $Z/_{\sim}$, that is the identity map $id: D_{a,b}(X)\rightarrow \E$ is continuous. This immediately yields that $\E$ is separable and any continuous map $g: \E\rightarrow Y\in \operatorname{Const}(X)_{a,b}$ is constant.

\begin{lemma}\label{fc}
The space $\E$ is first countable.
\end{lemma}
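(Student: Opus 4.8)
The plan is to exhibit, for each point $q\in\E$, an explicit countable local base, using first countability of $X$ at the three relevant points $a$, $b$ and --- for points living in a single slice --- the corresponding point of $X$. Everything rests on a monotonicity property of the operators $[\,\cdot\,]_V$ and $[\,\cdot_b\,]_V$, so I would isolate that first. The key observation is a \emph{minimality characterisation}: for open $U\subseteq X\setminus\{a,b\}$, $n\in\omega$ and $V\in\mathcal B(a)$, the set $[U_n]_V$ is the smallest $E\subseteq Z$ with $U_n\subseteq E$ obeying the rule $(\star_V)$: $z\in E\cap A\ \Rightarrow\ V_{f(z)}\subseteq E$; likewise $[U_b]_V$ (for $U\in\mathcal B(b)$) is the smallest $E\supseteq\bigcup_{n\in\omega}U_n$ with $(\star_V)$. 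This is immediate from the inductive definition: an easy induction on the stage $k$ shows $U_{(k)}$ lies inside any such $E$, and conversely $\bigcup_k U_{(k)}$ itself obeys $(\star_V)$.

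Two consequences I would record. \emph{Monotonicity}: if $U'\subseteq U$ and $V'\subseteq V$, then $[U'_n]_{V'}\subseteq[U_n]_V$ and $[U'_b]_{V'}\subseteq[U_b]_V$ --- because $[U_n]_V$ already contains $U'_n$ and, since $V'\subseteq V$, also obeys $(\star_{V'})$, so minimality applies. \emph{Saturation}: the sets $[U_n]_V$, $[U_b]_V$ are $\sim$-saturated, so that $q\in\pi(E)$ iff $\pi^{-1}(q)\subseteq E$. Indeed $b\notin U\cup V$ forces $[U_n]_V\cap(\{b\}\times\omega)=\emptyset$ while $b\in U$ gives $\{b\}\times\omega\subseteq[U_b]_V$; and if $x\in D_m$ and $k=f(x_m)$, then $x_m\in E$ implies $a_k\in V_k\subseteq E$ by $(\star_V)$, while conversely $a_k\in E$ forces (as $a\notin U$) that $a_k$ was put in by rule $(\star_V)$, i.e. $a_k\in V_{f(z)}$ for some $z\in E\cap A$; hence $f(z)=k$, so $z=x_m$ by injectivity of $f$, and $x_m\in E$.

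Now I would run through the three $\sim$-orbit types. For $q=b^*$ the only basic neighbourhoods are the $\pi([U_b]_V)$ (the $\pi([U_n]_V)$ contain no $b_j$), and all of them contain $b^*$; choosing, via first countability and regularity of $X$, decreasing bases $\{W_i\}_i\subseteq\mathcal B(b)$ at $b$ and $\{V_i\}_i\subseteq\mathcal B(a)$ at $a$, the family $\{\pi([(W_i)_b]_{V_i}):i\in\omega\}$ is a local base at $b^*$ by monotonicity. For $q=\pi(x_m)$ with $x\in D_m$ (so $\pi^{-1}(q)=\{x_m,a_{f(x_m)}\}$), fix decreasing bases $\{U_j(x)\}_j$ at $x$ inside $X\setminus\{a,b\}$ and $\{V_i\}_i\subseteq\mathcal B(a)$ at $a$, and claim $\{\pi([(U_j(x))_m]_{V_i}):i,j\in\omega\}$ is a local base; every member contains $q$ since $x_m\in(U_j(x))_m$. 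Given a basic neighbourhood $W=\pi(E)$ of $q$, I would ask how $\pi^{-1}(q)$ got into $E$: either $E=[U_n]_V$ with $n=m$ and $x\in U$, or $E=[U_b]_V$ with $x\in U$, in which case $(U_j(x))_m\subseteq U_m\subseteq E$ for large $j$; or $x$ entered $E$ through rule $(\star_V)$, which forces $x\in V$ and $V_m\subseteq E$, so $(U_j(x))_m\subseteq V_m\subseteq E$ once $U_j(x)\subseteq V$. In each case, taking also $V_i\subseteq V$, minimality of $[(U_j(x))_m]_{V_i}$ (it is the least set containing $(U_j(x))_m$ with $(\star_{V_i})$, and $E$ has $(\star_{V_i})$) gives $[(U_j(x))_m]_{V_i}\subseteq E$, hence $\pi([(U_j(x))_m]_{V_i})\subseteq W$. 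The third type, $q=\pi(y_m)$ with $y\in X\setminus(D\cup\{a,b\})$ (so $\pi^{-1}(q)=\{y_m\}$), is handled verbatim, with $y$ in place of $x$ and the $a_k$-bookkeeping dropped.

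The main obstacle --- and the reason the topology had to be coarsened to $\tau$ in the first place --- is precisely controlling how the two-point class $\{x_m,a_{f(x_m)}\}$ sits inside a basic set: one must rule out the a priori dangerous configuration $a_{f(x_m)}\in E$ but $x_m\notin E$, which would let a small slice-$m$ neighbourhood of $x$ escape from $W$. The saturation observation of the second paragraph (entry through $(\star_V)$ together with injectivity of $f$) is exactly what closes this gap; after that, the verification is routine bookkeeping with the minimality characterisation and first countability of $X$.
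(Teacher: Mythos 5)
Your proof is correct and follows essentially the same route as the paper's: exhibit the countable families $\{\pi([B^{(n)}_b]_{A^{(n)}})\}$ at $b^*$ and $\{\pi([U^{(k)}_n]_{A^{(k)}})\}$ at the other points, obtained by pushing neighborhood bases of $a$, $b$, and $x$ in $X$ through the $[\,\cdot\,]_V$ construction. The paper dismisses the verification as ``straightforward to check''; your minimality/monotonicity characterisation of $[\,\cdot\,]_V$ together with the saturation analysis of the two-point classes $\{x_m,a_{f(x_m)}\}$ is exactly the bookkeeping needed to justify that claim.
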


\begin{proof}
Let $\{A^{(n)}\mid n\in\omega\}\subset\mathcal B(a)$ and $\{B^{(n)}\mid n\in\omega\}\subset\mathcal B(b)$ be open neighborhood bases at the points $a\in X$ and $b\in X$, respectively.  It is straightforward to check that the family $\{\pi([B^{(n)}_b]_{A^{(n)}})\mid n\in\omega\}$ forms an open neighborhood base at the point $b^*\in \E$. Fix any $q\in \E\setminus\{b^*\}$ and notice that there exist $x\in X\setminus\{a,b\}$ and $n\in\omega$ such that $\pi^{-1}(q)\setminus\{a_n\mid n\in\omega\}=\{x_n\}$. Let $\{U^{(k)}\mid k\in \omega\}$ be an open neighborhood base at the point $x\in X$ such that $U^{(k)}\subset X\setminus\{a,b\}$ for any $k\in\omega$. It is easy to check that the family $\{\pi([U^{(k)}_n]_{A^{(k)}})\mid k\in\omega\}$ forms an open neighborhood base at the point $q\in \E$. Hence the space $\E$ is first countable.
\end{proof}

It remains to verify that $\E$ is regular. For this we need the following auxiliary lemma:

\begin{lemma}\label{cl}
For any $n\in\omega$ and subsets $W\in\mathcal{B}(b)$, $V\in\mathcal{B}(a)$, and $U\subset X\setminus\{a,b\}$ such that $a\not\in\cl_X(U)$ the following inclusions hold:
\begin{itemize}
\item [$(i)$] $\cl_{\E}(\pi([U_n]_V))\subset\pi([\cl_X(U)_n]_{\cl_X(V)})$;
\item [$(ii)$] $\cl_{\E}(\pi([W_b]_V))\subset\pi([\cl_X(W)_b]_{\cl_X(V)}).$
\end{itemize}
\end{lemma}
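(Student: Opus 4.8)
The plan is to prove both inclusions by essentially the same argument: if a point $q\in\E$ does not lie in the set on the right-hand side, then I will produce a basic neighborhood of $q$ disjoint from the set on the left-hand side. First I would unwind the definitions of the inductively-built sets $[U_n]_V$ and $[U_b]_V$: both are increasing unions $\cup_k U_{(k)}$, where $U_{(1)}$ sits inside $X_n$ (resp.\ $\cup_m X_m$ near $b$) and each successive stage $U_{(k+1)}$ adds, for every $a_{f(z)}$ with $z\in U^*_{(k)}\cap A$, the ``copy'' $V_{f(z)}$ of $V$ in the fiber indexed by $f(z)$. Taking closures in $Z$ commutes with this construction up to replacing $U$ by $\cl_X(U)$ and $V$ by $\cl_X(V)$ in each stage, because $\cl_{X_n}(U_{(k+1)})\subset \cl_X(U)_{(k+1)}$ when the latter is formed from $\cl_X(V)$; the condition $a\notin\cl_X(U)$ guarantees that the relevant points stay away from the gluing point $a_n$, so no unexpected identifications occur. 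Hence it suffices to show $\cl_{\E}(\pi(P))\subset \pi(\cl_Z(P'))$ where $P'$ is the saturation of the ``closed version'' of $P$ — but this closed version is exactly $[\cl_X(U)_n]_{\cl_X(V)}$ (resp.\ $[\cl_X(W)_b]_{\cl_X(V)}$), so the statement reduces to a statement about closures being absorbed into saturated closed sets.

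The key step is then the following: the set $C=\pi^{-1}\big(\pi([\cl_X(U)_n]_{\cl_X(V)})\big)$ is closed and saturated in $Z$. Saturatedness will follow from the observation that $[\cl_X(U)_n]_{\cl_X(V)}$ is already ``$\sim$-closed downward'': whenever it contains a point $x_m\in D_m$ it also contains the full copy of $\cl_X(V)$ in fiber $f(x_m)$, and in particular $a_{f(x_m)}$; and it contains no $b_m$ by the hypotheses $\operatorname{int}(V)\in\mathcal B(a)$ (so $b\notin\cl_X(V)$) together with $U\subset X\setminus\{a,b\}$, $W\in\mathcal B(b)$ handled separately. Closedness in $Z$ is checked fiber by fiber: intersecting $C$ with each clopen $X_m$ gives either $\cl_X(U)$, a copy of $\cl_X(V)$, a finite union of such copies, or (in the fiber $n$, resp.\ in fibers near $b$) exactly the closed set $\cl_X(U)_n$ (resp.\ $\cl_X(W)_m$); and one must verify local finiteness of the family of ``$\cl_X(V)$-copies'' across fibers so that their union is closed. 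Since $f$ is a bijection, each fiber index $f(z)$ is used at most once, and the relevant copies accumulate only at points that are themselves already in $C$, giving local finiteness exactly as in the proof of Lemma~\ref{l0}. Once $C$ is closed and saturated, $\pi(C)$ is closed in $\E$ and contains $\pi(P)$, hence contains $\cl_{\E}(\pi(P))$, which is the desired inclusion.

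I expect the main obstacle to be the bookkeeping in the inductive step showing that ``closure commutes with the bracket construction'': one has to argue simultaneously that (a) the closure of the $k$-th stage is contained in the $k$-th stage of the closed version, and (b) no new identifications under $\sim$ are created by passing to closures — this is precisely where $a\notin\cl_X(U)$ is used in part $(i)$, and where $b\notin\cl_X(V)$ (from $V\in\mathcal B(a)$) rules out the fiber of $b$ being touched. A secondary subtlety is that in case $(ii)$ the first stage $U_{(1)}=\cup_m W_m$ is spread over infinitely many fibers, so one needs the local finiteness of $\{\cl_X(W)_m\}_{m\in\omega}$ near $b^*$ (which holds because $W\in\mathcal B(b)$ forces $a\notin\cl_X(W)$, keeping these sets uniformly away from the $a$-fibers and from each other in the product topology on $Z$) to conclude $\cl_Z(U_{(1)})=\cup_m\cl_X(W)_m$. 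Modulo these two points, everything else is a routine verification that a saturated set built to be closed stage-by-stage is genuinely closed, and I would relegate that to a ``straightforward check'' as is done elsewhere in the paper.
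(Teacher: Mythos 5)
Your opening plan --- for each $q$ outside the right-hand side, produce a basic $\tau$-neighborhood of $q$ missing $\pi([U_n]_V)$ --- is exactly the paper's strategy, but you then abandon it, and the shortcut you substitute for it contains the real gap. Your ``key step'' is: $C=\pi^{-1}\big(\pi([\cl_X(U)_n]_{\cl_X(V)})\big)$ is closed and saturated in $Z$, \emph{therefore} $\pi(C)$ is closed in $\E$. The implication ``closed and saturated in $Z$ implies image closed'' is valid for the quotient space $D_{a,b}(X)$, but $\E$ carries the topology $\tau$ generated by the base $\mathcal{B}(\tau)$, which is weaker than the quotient topology --- that is the entire reason $\E$ was introduced, since the quotient topology need not be first countable. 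Moreover, \emph{every} closed subset of $D_{a,b}(X)$ has the form $\pi(C)$ for a closed saturated $C\subset Z$, so the principle you invoke would force $\tau$ to coincide with the quotient topology. As it stands, your argument only bounds $\cl_{D_{a,b}(X)}(\pi([U_n]_V))$, whereas the lemma concerns the (a priori larger) closure $\cl_{\E}(\pi([U_n]_V))$.

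What is actually required --- and what the paper does --- is to fix $q$ with $\pi^{-1}(q)\cap[\cl_X(U)_n]_{\cl_X(V)}=\emptyset$ and exhibit a neighborhood of $q$ \emph{from the base $\mathcal{B}(\tau)$} disjoint from $\pi([U_n]_V)$. For $q\neq b^*$ with $\pi^{-1}(q)\setminus\{a_k\mid k\in\omega\}=\{x_m\}$ one takes $\pi([T_m]_{O})$, where $T_m\subset X_m\setminus([\cl_X(U)_n]_{\cl_X(V)}\cup\{a_m,b_m\})$ contains $x_m$ and, crucially, $O\in\mathcal{B}(a)$ is chosen with $O\cap U=\emptyset$ (possible because $a\notin\cl_X(U)$); for $q=b^*$ one takes $\pi([T_b]_{O})$ with $T\in\mathcal{B}(b)$ satisfying $T\subset X\setminus\cl_X(V)$ and $T\cap U=\emptyset$, and the same $O$. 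One then verifies, by induction on the stages of the two bracket constructions and using the bijectivity of $f$ (so that a given fiber receives an $O$-copy and a $V$-copy only via the same point of $A$), that $[T_m]_{O}\cap[U_n]_V=\emptyset$. None of this choice of $O$ and $T$ appears in your write-up, and it is precisely where the hypotheses $a\notin\cl_X(U)$, $V\in\mathcal{B}(a)$ and $W\in\mathcal{B}(b)$ are used. Your observations about saturation and about closures commuting with the bracket construction are reasonable but peripheral: they establish closedness in $Z$ and hence in the quotient topology, not in $\tau$.
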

\begin{proof}
Fix any $n\in\omega$ and subsets $W\in\mathcal{B}(b)$, $V\in\mathcal{B}(a)$, $U\subset X\setminus\{a,b\}$ such that $a\not\in\cl_X(U)$. Let $q\in \E$ be such that $\pi^{-1}(q)\subset Z\setminus [\cl_X(U)_n]_{\cl_X(V)}$. Observe that the set $[\cl_X(U)_n]_{\cl_X(V)}$ is closed in $Z$. Two cases are possible:

 1. $q\neq b^*$.  Let $\{x_{m}\}=\pi^{-1}(q)\setminus\{a_{k}\mid k\in \omega\}$.
Fix an open subset $T_m\subset X_{m}\setminus ([\cl_X(U)_n]_{\cl_X(V)}\cup\{a_m,b_m\})$ which contains $x_{m}$,  and
an $O\in \mathcal{B}(a)$ such that $O\cap U=\emptyset$.  It can be checked that  $\pi([T_m]_{O})$ is an open neighborhood of $q$ in $\E$ which is disjoint from the set $\pi([U_n]_V)$, which yields $q\notin \cl_{\E}(\pi([U_n]_V))$.
\smallskip

2. $q=b^*$. Fix any $T\in\mathcal{B}(b)$ such that $T\subset X\setminus\cl_X(V)$ and $T\cap U=\emptyset$.
Let $O\in \mathcal{B}(a)$ be such that $O\cap U=\emptyset$.  A routine verifications show that $[T_b]_{O}\cap [U_n]_V=\emptyset$.
Therefore, the set $\pi([T_b]_{O})$ is an open neighborhood of $b^*$ in $\E$ which is disjoint with the set $\pi([U_n]_V)$, thus witnessing that $b^*\notin \cl_{\E}(\pi([U_n]_V))$. This proves the inclusion $(i)$.

The second inclusion can be proved analogously.
\end{proof}

\begin{lemma}\label{reg}
The space $\E$ is regular.
\end{lemma}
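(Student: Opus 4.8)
The goal is to verify that $\E$ is regular. Since $\E$ is $T_1$ (it is a quotient of $Z$ by the same equivalence relation $\sim$, whose classes are closed, and the topology $\tau$ still separates points), it suffices to show that each basic open neighborhood of a point contains the closure of a smaller basic open neighborhood of that point. The key tool is Lemma~\ref{cl}, which bounds closures of basic sets of the form $\pi([U_n]_V)$ and $\pi([W_b]_V)$ by sets of the same form built from the $X$-closures of the parameters. So the strategy is: given a point $q$ and a basic neighborhood of it, use regularity and first-countability of $X$ to shrink all the parameters ($U$, $V$, and, at $b^*$, $W$) to smaller open sets whose $X$-closures still sit inside the original parameters, and then invoke Lemma~\ref{cl} to conclude that the closure of the shrunken basic set is contained in the original one.

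\begin{proof}
Since the topology of $\E$ is generated by the base $\mathcal{B}(\tau)$ and $\E$ is $T_1$, to prove regularity it suffices to show that for every $q\in\E$ and every $B\in\mathcal{B}(\tau)$ with $q\in B$ there is $B'\in\mathcal{B}(\tau)$ with $q\in B'\subset\cl_\E(B')\subset B$.

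First suppose $q\neq b^*$, so there are $x\in X\setminus\{a,b\}$ and $n\in\omega$ with $\pi^{-1}(q)\setminus\{a_k\mid k\in\omega\}=\{x_n\}$, and let $B=\pi([U_n]_V)$ with $q\in B$, $U$ open in $X\setminus\{a,b\}$, $V\in\mathcal B(a)$; in particular $x\in U$. Since $X$ is regular, pick an open $U'\subset X\setminus\{a,b\}$ with $x\in U'\subset\cl_X(U')\subset U$ (so also $a\notin\cl_X(U')$), and pick $V'\in\mathcal B(a)$ with $\cl_X(V')\subset V$, which is possible because $a\notin\cl_X(\{b\})$ and $X$ is regular. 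Then $q\in\pi([U'_n]_{V'})=:B'\in\mathcal B(\tau)$ (one checks $x_n\in U'_{(1)}$ and that the inductive clause producing $[U'_n]_{V'}$ stays inside $[U_n]_V$), and by Lemma~\ref{cl}$(i)$,
$$\cl_\E(B')\subset\pi\big([\cl_X(U')_n]_{\cl_X(V')}\big)\subset\pi([U_n]_V)=B,$$
the last inclusion holding because $\cl_X(U')\subset U$ and $\cl_X(V')\subset V$ imply $[\cl_X(U')_n]_{\cl_X(V')}\subset[U_n]_V$.

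Now suppose $q=b^*$ and let $B=\pi([W_b]_V)$ with $W\in\mathcal B(b)$, $V\in\mathcal B(a)$. Using regularity of $X$ at $b$ and at $a$, choose $W'\in\mathcal B(b)$ with $\cl_X(W')\subset W$ and $V'\in\mathcal B(a)$ with $\cl_X(V')\subset V$. Then $b^*\in\pi([W'_b]_{V'})=:B'\in\mathcal B(\tau)$, and Lemma~\ref{cl}$(ii)$ gives
$$\cl_\E(B')\subset\pi\big([\cl_X(W')_b]_{\cl_X(V')}\big)\subset\pi([W_b]_V)=B,$$
again because $[\cl_X(W')_b]_{\cl_X(V')}\subset[W_b]_V$. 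Hence $\E$ is regular.
\end{proof}

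Note that basic neighborhoods of a point $q\neq b^*$ are all of the form $\pi([U_n]_V)$ (a neighborhood of the form $\pi([U'_b]_{V'})$ cannot contain $q\neq b^*$), and basic neighborhoods of $b^*$ are all of the form $\pi([W_b]_V)$, so the two cases above exhaust all pairs $(q,B)$. The only mild subtlety, and the part that deserves care rather than the bald ``one checks,'' is verifying the set-level inclusions $[\cl_X(U')_n]_{\cl_X(V')}\subset[U_n]_V$ and $[\cl_X(W')_b]_{\cl_X(V')}\subset[W_b]_V$ by simultaneous induction on the stages $U_{(k)}$ of the two constructions, together with checking that $\pi([U'_n]_{V'})$ and $\pi([W'_b]_{V'})$ genuinely contain $q$ and $b^*$ respectively; all of this is a routine unravelling of the inductive definitions of $[\cdot]_\cdot$ and uses only monotonicity of each construction step in its parameters.
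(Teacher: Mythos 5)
Your proof is correct and follows essentially the same route as the paper's: shrink the parameters $U$ (or $W$) and $V$ using the regularity of $X$, then apply Lemma~\ref{cl} together with the monotonicity of the $[\cdot]_\cdot$ construction to trap the closure of the smaller basic set inside the given one. The only cosmetic difference is that the paper verifies $T_1$ by exhibiting each singleton as the intersection of its basic neighborhoods, whereas you assert it more briefly; the substance is identical.
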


\begin{proof}
Fix any $q\in\E\setminus\{b^*\}$, and let $\pi^{-1}(q)\setminus\{a_k\mid k\in\omega\}=\{x_n\}$. Observe that
$$\{q\}=\cap\{\pi([U_n]_V) \mid \hbox{ } V\in\mathcal{B}(a),\hbox{ } x\in U\subset X\setminus\{a,b\} \hbox{ and }U \hbox{ is open in }X\}\qquad \hbox{and}$$
$$\{b^*\}=\cap\{\pi([U_b]_V)\mid V\in\mathcal{B}(a), U\in\mathcal{B}(b)\}.$$ Hence $\E$ is a $T_1$ space.

Let $\pi([U_n]_V)$ be any basic open neighborhood of the point $q$. Recall that $U$ is an open neighborhood of $x$ in $X$. By the regularity of $X$, there exists an open neighborhood $W\subset X$ of $x$ such that $\cl_X(W)\subset U$. Using one more time the regularity of $X$ find any $F\in\mathcal{B}(a)$ such that $\cl_X(F)\subset V$. Lemma~\ref{cl} implies that
$$q\in\pi([W_n]_F)\subset \cl_{\E}(\pi([W_n]_F))\subset\pi([\cl_X(W)_n]_{\cl_X(F)})\subset \pi([U_n]_V).$$

Fix any open neighborhood $\pi([U_b]_V)$ of $b^*$. By the regularity of $X$, there exist $W\in\mathcal{B}(b)$ and $F\in\mathcal{B}(a)$ such that $\cl_X(W)\subset U$ and $\cl_X(F)\subset V$. Lemma~\ref{cl} provides that
$$\cl_{\E}(\pi([W_b]_F))\subset \pi([\cl_X(W)_b]_{\cl_X(F)})\subset \pi ([U_b]_V).$$

Hence the space $\E$ is regular.
\end{proof}


Let $\heartsuit$ be the assertion ``$\mathfrak{b}=\mathfrak{c}$ and there exists a simple $P_{\mathfrak{c}}$-point''.
The following theorem will be crucial in our construction of $\R$-rigid Nyikos space.

\begin{theorem}\label{BZ}
$(\heartsuit)$ Let $X$ be a regular first countable space of cardinality $\kappa<\mathfrak{c}$. Then there exists a regular first countable countably compact space $O(X)$ which contains $X$ as a subspace.
\end{theorem}

\begin{proof}
Let $X$ be a regular first countable space such that $|X|<\mathfrak{c}$. If $X$ is countably compact, then put $O(X)=X$. Otherwise, let $\mathcal{D}=\{A\in [X]^{\omega}\mid A$ is closed and discrete$\}$ and fix any bijection $h: \mathcal{D}\cup [\mathfrak{c}]^{\omega}\rightarrow \mathfrak{c}$ such that $h(a)\geq \sup(a)$ for any $a\in [\mathfrak{c}]^{\omega}$. It is easy to see that such a bijection exists. Next, using ideas of Ostaszewski~\cite{Ost}, for every $\alpha\leq \mathfrak{c}$ we shall inductively construct a topology $\tau_{\alpha}$ on $X\cup\alpha$ by defining an open neighborhood base $\mathcal{B}^{\alpha}(x)$ at each point $x\in X\cup\alpha$. For convenience, by $Y_{\alpha}$ we denote the space $(X\cup \alpha,\tau_{\alpha})$. At the end, we will show that the space $O(X)=Y_{\mathfrak{c}}$ has the desired properties, in particular, $\mathcal{B}^{\mathfrak{c}}(x)$ is a countable base at $x$ for any $x\in Y_{\mathfrak{c}}$.

Let $Y_0=X$ and for each $x\in X$ fix an open neighborhood base $\mathcal{B}^0(x)=\{U^0_n(x)\mid n\in\omega\}$ at the point $x$ such that $U^0_0(x)=X$ and $\overline{U^0_{n+1}(x)}\subset U^0_{n}(x)$. Assume that for each $\alpha<\xi$ the spaces $Y_{\alpha}$ are already constructed by defining for each $x\in Y_{\alpha}$ the family $\mathcal{B}^{\alpha}(x)=\{U^{\alpha}_n(x)\mid n\in\omega\}$ which forms an open neighborhood base at $x$ in $Y_{\alpha}$. Moreover, additionally suppose that the spaces $Y_{\alpha}$ are $T_1$, $U^{\alpha}_{0}(x)=Y_{\alpha}$, $\cl_{Y_{\alpha}}(U^{\alpha}_{n+1}(x))\subset U^{\alpha}_{n}(x)$ and $U^{\alpha}_n(x)= U^{\beta}_n(x)\cap Y_\alpha$ for any $x\in Y_{\alpha}$ and $\alpha<\beta<\xi$. Then we have three cases to consider:
\begin{itemize}
\item[1)] $\xi=\eta+1$ for some $\eta\in\mathfrak{c}$ and $h^{-1}(\eta)$ already has an accumulation point in $Y_{\eta}$;
\item[2)] $\xi=\eta+1$ for some $\eta\in\mathfrak{c}$ and $h^{-1}(\eta)$ does not have an accumulation point in $Y_{\eta}$;
\item[3)] $\xi$ is a limit ordinal.
\end{itemize}

1) Let $\mathcal{B}^{\xi}(x)=\mathcal{B}^{\eta}(x)$ for each $x\in Y_{\eta}$. The open neighborhood base $\mathcal{B}^{\xi}(\eta)$ at the point $\eta\in Y_{\xi}$ is defined as follows: $U_0^{\xi}(\eta)=Y_{\xi}$ and $U_n^{\xi}(\eta)=\{\eta\}$ for each $n>0$.

2) Let $h^{-1}(\eta)=\{z_n\}_{n\in\omega}$. Fix any simple $P_{\mathfrak{c}}$-point $p$ which exists by the assumption. Since the set $h^{-1}(\eta)$ is closed and discrete, for any $y\in y_{\eta}$ there exists a positive integer $k(y)$ such that  $|U_{k(y)}^{\eta}(y)\cap h^{-1}(\eta)|\leq 1$. Since $U_0^{\eta}(y)=X$ and the ultrafilter $p$ is free, for any $y\in Y_{\eta}$ there exists $m(y)<k(y)$ such that $F_y=\{n\in\omega\mid z_n\in U_{m(y)}^{\eta}(y)\setminus U^{\eta}_{m(y)+1}(y)\}\in p$.  Since $|Y_{\eta}|<\mathfrak{c}$ there exists $F_{\eta}\in p$ such that $F_{\eta}\subset^{*}F_{y}$ for any $y\in Y_{\eta}$. Let $d_{\eta}=\{z_n\mid n\in F_{\eta}\}$. Note that $$d_{\eta}\subset^{*}U^{\eta}_{m(y)}(y)\setminus U^{\eta}_{m(y)+1}(y)\subset U^{\eta}_{m(y)}(y)\setminus \cl_{Y_{\eta}}(U^{\eta}_{m(y)+2}(y))$$for any $y\in Y_{\eta}$. Since the set $U^{\eta}_{m(y)}(y)\setminus\cl_{Y_{\eta}}(U^{\eta}_{m(y)+2}(y))$ is open there exists a function $f_y\in\omega^{\omega}$ such that $\cl_{Y_{\eta}}(U^{\eta}_{f_{y}(n)}(z_n))\subset U^{\eta}_{m(y)}(y)\setminus\cl_{Y_{\eta}}(U^{\eta}_{m(y)+2}(y))$ for all but finitely many $n\in F_{\eta}$. Let $f_{\eta}\in\omega^{\omega}$ be a function such that $f_{\eta}\geq^*f_y$ for each $y\in Y_{\eta}$. Such an $f_{\eta}$ exists because $|Y_{\eta}|<\mathfrak{b}=\mathfrak{c}$. Next we define the open neighborhood base at the point $\eta\in Y_{\xi}$: Put $U_0^{\xi}(\eta)=Y_{\xi}$ and  $U^{\xi}_k(\eta)=\cup\{U^{\eta}_{f_{\eta}(n)+k}(z_n)\mid n\in F_{\eta}\setminus k\}\cup\{\eta\}$ for all $k\geq 1$. For each $y\in Y_{\eta}$ and $n\in\omega$ let $U_{n}^{\xi}(y)=U_n^{\eta}(y)$ if $n\geq m(y)+1$ and $U_{n}^{\xi}(y)=U_n^{\eta}(y)\cup\{\eta\}$ if $n\leq m(y)$. Obviously, the space $Y_{\xi}$ is $T_1$.
We claim that the family $\{U^{\eta}_{f_{\eta}(n)}(z_n)\mid n\in F_{\eta}\}$ is locally finite in $Y_{\eta}$.
Recall that for each $y\in Y_{\eta}$, $\cl_{Y_{\eta}}(U^{\eta}_{f_{y}(n)}(z_n))\subset U^{\eta}_{m(y)}(y)\setminus\cl_{Y_{\eta}}(U^{\eta}_{m(y)+2}(y))$ for all but finitely many $n\in F_{\eta}$. Since $f_{\eta}\geq^* f_y$ for any $y\in Y_{\eta}$ we obtain that $\cl_{Y_{\eta}}(U^{\eta}_{f_{\eta}(n)}(z_n))\subset U^{\eta}_{m(y)}(y)\setminus\cl_{Y_{\eta}}(U^{\eta}_{m(y)+2}(y))$ for all but finitely many $n\in F_{\eta}$. Hence for any $y\in Y_{\eta}$ the set $U^{\eta}_{m(y)+2}(y)$ intersects only finitely many elements of the family $\{U^{\eta}_{f_{\eta}(n)}(z_n)\mid n\in F_{\eta}\}$ witnessing that this family is locally finite. Fix any $k\in\omega$. Since $U^{\eta}_{f_{\eta}(n)+k}(z_n)\subset U^{\eta}_{f_{\eta}(n)}(z_n)$ we obtain that the family $\{U^{\eta}_{f_{\eta}(n)+k}(z_n)\mid n\in F_{\eta}\}$ is locally finite as well. The following proves that $Y_\xi$ is Hausdorff: for every $y\in Y_\eta$ the intersection
$U^\xi_{m(y)+2}(y)\cap U^\xi_k(\eta)=U^\eta_{m(y)+2}(y)\cap U^\xi_k(\eta)$ is empty provided that $k$ is larger than all of the (finitely many)
elements  $n\in F_\eta$ such that $U^\eta_{f_y(n)}(z_n)\cap U^\xi_{m(y)+2}(y)\neq\emptyset$.

Next, we shall  prove that $\cl_{Y_{\xi}}(U^{\xi}_{n+1}(y))\subset U^{\xi}_{n}(y)$ for each $y\in Y_{\xi}$. First consider the  case
$y\in Y_\eta$. If $n\geq m(y)+1$, then
$$\cl_{Y_{\xi}}(U^{\xi}_{n+1}(y))=\cl_{Y_{\xi}}(U^{\eta}_{n+1}(y))=\cl_{Y_{\eta}}(U^{\eta}_{n+1}(y))\subset U^\eta_n(y)\subset U^\xi_n(y),$$
because in this case $n+1\geq m(y)+2$ and thus $\eta$, the only element of $Y_\xi\setminus Y_\eta$, has a neighbourhood disjoint from
$U^\eta_{n+1}(y)$, as we have established above. If $n\leq m(y)$, then we have
\begin{eqnarray*}
\cl_{Y_{\xi}}(U^{\xi}_{n+1}(y))\subset \cl_{Y_{\xi}}(U^{\eta}_{n+1}(y)\cup\{\eta\})=\cl_{Y_{\xi}}(U^{\eta}_{n+1}(y))\cup\{\eta\}= \cl_{Y_{\eta}}(U^{\eta}_{n+1}(y))\cup\{\eta\}\subset \\
\subset U^\eta_n(y)\cup\{\eta\} = U^\xi_n(y).
\end{eqnarray*}~The second equality is again a consequence of the fact that $\{\eta\}=Y_\xi\setminus Y_\eta$.

Now let us consider the case $y=\eta$.
For each $k\in\omega$ we have
\begin{eqnarray*}
 \cl_{Y_\xi}(U^{\xi}_{k+1}(\eta))= \cl_{Y_\xi}\big(\bigcup\{U^{\eta}_{f_{\eta}(n)+k+1}(z_n) \mid n\in F_{\eta}\setminus (k+1)\}\cup\{\eta\}\big)=\\
=  \cl_{Y_\xi}\big(\bigcup\{U^{\eta}_{f_{\eta}(n)+k+1}(z_n) \mid n\in F_{\eta}\setminus (k+1)\}\big)\cup\{\eta\}=\\
= \cl_{Y_\eta}\big(\bigcup\{U^{\eta}_{f_{\eta}(n)+k+1}(z_n) \mid n\in F_{\eta}\setminus (k+1)\}\big)\cup\{\eta\}=\\
=\bigcup\{\cl_{Y_\eta}\big(U^{\eta}_{f_{\eta}(n)+k+1}(z_n)\big) \mid n\in F_{\eta}\setminus (k+1)\}\cup\{\eta\}\subset\\
\subset \bigcup\{U^{\eta}_{f_{\eta}(n)+k}(z_n) \mid n\in F_{\eta}\setminus (k+1)\}\cup\{\eta\}\subset U^\xi_{k}(\eta).
\end{eqnarray*}
The latest equality is a consequence of the local finiteness of the family
$\{U^{\eta}_{f_{\eta}(n)+k+1}(z_n) \mid n\in F_{\eta}\}$ in $Y_\eta$.

3)  For each $y\in Y_\xi$ put $U^{\xi}_n(y)=\cup_{\alpha(y)<\gamma<\xi}U^{\gamma}_n(y)$, where $\alpha(y)=\min\{\alpha<\xi:y\in Y_\alpha\}$.
The next claim implies that the space $Y_{\xi}$ is Hausdorff.
\begin{claim}\label{a}
For any $\gamma<\xi$, $n,m\in\omega$ and distinct points $y_0,y_1\in Y_{\gamma}$, if $U_n^{\gamma}(y_0)\cap U_m^{\gamma}(y_1)=\emptyset$, then $U_n^{\xi}(y_0)\cap U_m^{\xi}(y_1)=\emptyset$.
\end{claim}
\begin{proof}
To derive a contradiction, assume that  $U_n^{\xi}(y_0)\cap U_m^{\xi}(y_1)\neq\emptyset$. It is easy to see that $U_n^{\xi}(y_0)\cap U_m^{\xi}(y_1)\subset [\gamma,\xi)$. Let $\delta=\min U_n^{\xi}(y_\xi)\cap U_m^{\xi}(y_1)$. It follows that $U_n^{\delta}(y_0)\cap U_m^{\delta}(y_1)=\emptyset$ and $U_n^{\delta+1}(y_0)\cap U_m^{\delta+1}(y_1)=\{\delta\}$. Then the set $d_{\delta}$ (see case 2 above) is closed and discrete in $Y_{\delta}$.  Since $\delta\in U_{n}^{\delta+1}(y_0)\cap U_{m}^{\delta+1}(y_1)$, the definition of $U_{n}^{\delta+1}(y_0)$ and $U_{m}^{\delta+1}(y_1)$ implies that $d_{\delta}\subset^* U_{n}^{\delta}(y_0)\cap U_{m}^{\delta}(y_1)\neq \emptyset$. But this contradicts our assumption.
\end{proof}
It remains to check that $\cl_{Y_{\xi}}(U^{\xi}_{n+1}(y))\subset U^{\xi}_{n}(y)$ for each $y\in Y_{\xi}$.
Fix any $y\in Y_{\xi}$ and $n\in\omega$. By the assumption, $\cl_{Y_{\gamma}}U^{\gamma}_{n+1}(y)\subset U^{\gamma}_{n}(y)$ for each $\gamma<\xi$. To derive a contradiction, assume that there exists $z\in Y_{\xi}$ such that $z\in \cl_{Y_{\xi}}U^{\xi}_{n+1}(y)\setminus U^{\xi}_{n}(y)$. Fix any $\delta<\xi$ such that $z\in Y_{\delta}$. We claim that $z\in \cl_{Y_{\delta}}U^{\delta}_{n+1}(y)$. Indeed, if $z\notin \cl_{Y_{\delta}}(U^{\delta}_{n+1}(y))$, then there exists $m\in\omega$ such that $U_{m}^{\delta}(z)\cap U^{\delta}_{n+1}(y)=\emptyset$. The above Claim implies that $U_m^{\xi}(z)\cap U_{n+1}^{\xi}(y)=\emptyset$ witnessing that $z\notin \cl_{Y_{\xi}}(U^{\xi}_{n+1}(y))$. The obtained contradiction implies that $z\in \cl_{Y_{\delta}}(U^{\delta}_{n+1}(y))\subset U^{\delta}_{n}(y)\subset U^{\xi}_n(y)$, which contradicts the choice of $z$.
Hence $\cl_{Y_{\xi}}(U^{\xi}_{n+1}(y))\subset U^{\xi}_{n}(y)$ for each $y\in Y_{\xi}$.

Let $O(X)=Y_{\mathfrak{c}}$. By the construction, the space $O(X)$ is regular and first countable. Let $A$ be any countable subset of $O(X)$. If the set $B=A\cap \mathfrak{c}$ is infinite, then consider $h(B)\in\mathfrak{c}$. By the construction, either $B$ has an accumulation point in $Y_{h(B)}$ or $h(B)$ is an accumulation point of $B$ in $Y_{h(B)+1}$. In both cases $B$ has an accumulation point in $O(X)$. If $A\subset^* X$, then either it has an accumulation point in $X$, or $A$ is closed and discrete in $X$. In the latter case either $A$ has an accumulation point in $Y_{h(A)}$ or $h(A)$ is an accumulation point of $A$ in $Y_{h(A)+1}$. Thus $O(X)$ is countably compact.
\end{proof}

We are in a position now to present the
\medskip

\noindent\textit{Proof of Theorem~\ref{t6}.} \
Let $X$ be regular non-normal Nyikos space constructed by Nyikos and Vaughan under $\mathfrak{t}=\omega_1$ in~\cite{NV}. Note that $|X|=\omega_1$. Applying Jones machine to $X$ we obtain a Nyikos space $J(X)$ of cardinality $\omega_1$ containing two points $a,b$  which cannot be separated by any real-valued continuous map. Then consider the space $E_{a,b}(J(X))$ which is regular separable first countable $\R$-rigid and $|E_{a,b}(J(X))|=\omega_1$. Theorem~\ref{BZ} implies that the space $O(E_{a,b}(J(X)))$ is regular first countable and countably compact. Then $\overline{E_{a,b}(J(X))}\subset O(E_{a,b}(J(X)))$ is an $\R$-rigid Nyikos space.
\hfill $\Box$

\begin{remark}
Example 2 from~\cite{BBR} shows that there exists a regular first countable space of cardinality $\mathfrak{d}$ which cannot be embedded into Urysohn countably compact spaces. Also, it can be proved that for any mad family $A$ on $\omega$ the Mr\'owka-Isbell space $\Psi(A)$ is Tychonoff first countable and cannot be embedded into any Hausdorff countably compact space of character $<\mathfrak{b}$.
Hence Theorem~\ref{BZ} cannot be proved within ZFC. In particular its conclusion cannot hold in any model of $\min\{\mathfrak{d},\mathfrak{a}\}<\mathfrak{c}$.
\end{remark}

Theorems~\ref{t4} and \ref{t6} motivate the following

\begin{problem}
Does there exist a ZFC example of a separable sequentially compact regular $\R$-rigid space?
\end{problem}

By $d(X)$, $w(X)$, $\chi(X)$, $\psi(X)$, $s(X)$, $e(X)$, and $c(X)$ is denoted the
density,  weight, character, pseudocharacter, spread, extent, and cellularity
of the space $X$, respectively. We refer the reader to~\cite{Eng} for their definitions.
We end up with a remark which shows how the extensions $D_{a,b}(X)$ and $E_{a,b}(X)$ can be generalized for a non-separable space $X$ and reveals some of their properties. Its proof is fairly standard and is therefore left to the reader.

\begin{remark}
Let $a,b\in X$, $\mathcal B(a)$, and $\mathcal B(b)$ be such as in Section~\ref{V}.
  Let $Z$ be the Tychonoff product $X{\times} d(X)$ where $d(X)$ is endowed with the discrete topology. For any $x\in X$ and $\alpha\in d(X)$ by $x_{\alpha}$ we denote the point $(x,\alpha)$. Analogously, for any $B\subset X$ and $\alpha\in d(X)$ the set $B{\times}\{\alpha\}$ is denoted by $B_{\alpha}$. Let $D$ be a dense subset of $X$ such that $D\subset X\setminus\{a,b\}$ and $|D|=d(X)$. Put $E=\cup_{\alpha\in d(X)}D_{\alpha}$. Fix any bijection $f:E\rightarrow d(X)$ such that $f(x_{\alpha})\neq \alpha$ for each $\alpha\in d(X)$ and $x_{\alpha}\in D_{\alpha}$. On the set $Z$ consider the smallest equivalence relation $\sim$ which satisfies the following conditions:
\begin{itemize}
\item $x_{\alpha}\sim a_{f(x_{\alpha})}$ for any $\alpha\in d(X)$ and $x\in D$;
\item $b_{\alpha}\sim b_{\xi}$ for any $\alpha,\xi\in d(X)$.
\end{itemize}
Let $\D$ be the quotient space $Z/_{\sim}$.  Analogously, one can generalize the extension $\E$ for non-separable spaces.

Let $Z\in\{\D,\E\}$.
Then every continuous map $g:Z\rightarrow Y\in \operatorname{Const}(X)_{a,b}$ is constant.
The space $\D$ is regular and $d(\D)=d(X)$. If the space $X$ is countably compact, then $\D$ has property $D$. Also, analogues of Lemma~\ref{l2} and Corollary~\ref{c1} hold for the generalized extension $\D$.

The space $\E$ is also regular and $d(\E)=d(X)$,
$w(\E)=w(X)$,
$\chi(\E)=\chi(X)$,
$\psi(\E)=\psi(X)$,
$s(\E)= \max\{s(X),d(X)\}$,
 $e(\E)=\max\{e(X),d(X)\}$, and
$c(\E) = \max\{c(X),d(X)\}$.
\end{remark}

\section*{Acknowledgements}
The authors acknowledge the Referee for numerous valuable remarks, in particular, for the idea of generalizing Lemma~\ref{CC} which significantly improves Theorem~\ref{t2}, and Piotr Borodulin-Nadzieja for his comments on Question~\ref{p3}. In parti\-cu\-lar, Piotr convinced us that this question cannot be solved using tight gaps.

\end{document}